\newtheorem{theorem}{Theorem}[section]
\newtheorem{proposition}[theorem]{Proposition}
\newtheorem{lemma}[theorem]{Lemma}
\newtheorem{claim}[theorem]{Claim}
\newtheorem{corollary}[theorem]{Corollary}
\newcommand{\C}{\ensuremath{\mathbb{C}}}
\newcommand{\R}{\ensuremath{\mathbb{R}}}
\newcommand{\Z}{\ensuremath{\mathbb{Z}}}
\newcommand{\lat}{\mathcal{L}}
\newcommand{\eps}{\varepsilon} 
\renewcommand{\epsilon}{\varepsilon}
\newcommand{\rank}{\mathrm{rank}}
\newcommand{\vol}{\mathrm{vol}}
\DeclareMathOperator{\spn}{span}
\renewcommand{\vec}[1]{\ensuremath{\boldsymbol{#1}}}
\newcommand{\basis}{\ensuremath{\mathbf{B}}}
\DeclarePairedDelimiter\inner{\langle}{\rangle}
\DeclarePairedDelimiter\floor{\lfloor}{\rfloor}
\title{A Tight Reverse Minkowski Inequality for the Epstein Zeta Function}
\author{Yael Eisenberg\thanks{Cornell University. This material is based upon work supported by the National Science Foundation Graduate Research Fellowship under Grant No. DGE - 1650441.} 
\and 
Oded Regev\thanks{Courant Institute of Mathematical Sciences, New York
				University. Supported by the Simons Collaboration on Algorithms and Geometry, by a Simons Investigator Award from the Simons Foundation, and by the National Science Foundation (NSF) under Grant No.~CCF-1320188. Any opinions, findings, and conclusions or recommendations expressed in this material are those of the authors and do not necessarily reflect the views of the NSF.}
\and 
Noah Stephens-Davidowitz\thanks{Cornell University. Supported in part by the National Science Foundation under Grant No.~CCF-2122230.}}
\date{}
\newcommand{\zf}[2]{\zeta(#1,#2)}
\newcommand{\qzf}[3]{\zeta(#1,#2;#3)}
\newcommand{\qzfp}[3]{\zeta'(#1,#2;#3)}
\newcommand{\Kbar}{\overline{K}}
\newcommand{\intd}{\,{\rm d}}
\begin{document}

\maketitle

\begin{abstract}
    We prove that if $\lat \subset \R^n$ is a lattice such that $\det(\lat') \geq 1$ for all sublattices $\lat' \subseteq \lat$, then
    \[
        \sum_{\substack{\vec{y}\in\lat\\\vec{y}\neq\vec0}} (\|\vec{y}\|^2+q)^{-s} \leq \sum_{\substack{\vec{z} \in \Z^n\\\vec{z}\neq\vec0}} (\|\vec{z}\|^2+q)^{-s}
    \]
    for all $s > n/2$ and all $0 \leq q \leq (2s-n)/(n+2)$, with equality if and only if $\lat$ is isomorphic to $\Z^n$. 
\end{abstract}

\section{Introduction}

 A lattice $\lat \subset \R^n$ is the set of all integer linear combinations of linearly independent basis vectors $\basis = (\vec{b}_1,\ldots, \vec{b}_n)$, i.e.,
 \[
    \lat = \{z_1 \vec{b}_1 + \cdots + z_n \vec{b}_n \ : \ z_i \in \Z\}
    \; .
 \]
 The determinant of the lattice, $\det(\lat) = |\det(\basis)|$, is a measure of its density on large scales, in the sense that
 \[
 \det(\lat) = \lim_{r \rightarrow \infty} \frac{\vol(r B_2^n)}{|\lat \cap r B_2^n|}
 \; ,
 \]
 where $rB_2^n$ denotes the closed Euclidean ball of radius $r >0$, whose volume is 
 roughly $(2\pi er^2/n)^{n/2}$. Minkowski's celebrated first theorem relates this large-scale density to smaller-scale density. Specifically, the following variant of Minkowski's theorem due to Blichfeldt and van der Corput shows that a lattice that is dense over large scales must also contain many points in a fixed ball.\footnote{Blichfeldt and van der Corput actually showed the slightly stronger bound $|\lat \cap rB_2^n| \geq 2\floor{2^{-n} \cdot \vol(r B_2^n)}+1$ and considered arbitrary norms, not just $\ell_2$.  (See, e.g.,~\cite[Thm.~1 of Ch.~2, Sec.~7]{Lekkerkerker_book}.)}
 
 \begin{theorem}[\cite{vanderCorput1936}]
			\label{thm:minkowski}
			For any lattice $\lat \subset \R^n$ with $\det(\lat) = 1$ and $r > 0$,
			\begin{equation*}
			|\lat \cap r B_2^n| 
			\geq 2^{-n} \cdot \vol(r B_2^n) 
			\approx \Big(\frac{\pi er^2}{2n}\Big)^{n/2}
			\; . 
			\end{equation*}
\end{theorem}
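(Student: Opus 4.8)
The statement is the Blichfeldt–van der Corput sharpening of Minkowski's first theorem, and the plan is to prove it by a \emph{folding} (pigeonhole) argument combined with the central symmetry and convexity of the ball. Throughout write $K := rB_2^n$ and $S := \tfrac12 K = \tfrac{r}{2} B_2^n$, and recall that $\det(\lat) = 1$. The one geometric fact I would isolate at the outset is that, since $K$ is convex and centrally symmetric ($-K = K$), one has $S - S = \tfrac12 K + \tfrac12 K = K$, the midpoint set of $K$ being all of $K$ by convexity. This identity is where the factor $2^{-n}$ ultimately enters, and it is the only place the shape of $K$ is used.

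First I would pass to the torus $\R^n/\lat$ and count with multiplicity. Fixing a fundamental domain $\mathcal{F}$ of $\lat$, so that $\vol(\mathcal{F}) = \det(\lat) = 1$ and the translates $\mathcal{F} + \vec{v}$ ($\vec{v}\in\lat$) tile $\R^n$, I define the $\lat$-periodic multiplicity function $g(\vec{x}) := |\{\vec{v} \in \lat : \vec{x} + \vec{v} \in S\}|$. Unfolding the integral over the tiling gives $\int_{\mathcal{F}} g(\vec{x}) \intd \vec{x} = \vol(S) = 2^{-n}\vol(K)$, which is finite since $S$ is bounded.

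The crux is a pointwise bound $g(\vec{x}) \le |\lat \cap K|$ valid for every $\vec{x}$. Indeed, if $\vec{x} + \vec{v}_1, \dots, \vec{x} + \vec{v}_g$ are the distinct points of $S$ lying over a given torus point, then each difference $\vec{v}_i - \vec{v}_1 = (\vec{x}+\vec{v}_i) - (\vec{x}+\vec{v}_1)$ is a difference of two elements of $S$, hence lies in $S - S = K$ by the identity above, and it clearly lies in $\lat$. Since the $\vec{x}+\vec{v}_i$ are distinct, the $g$ vectors $\vec{v}_i - \vec{v}_1$ (for $i = 1, \dots, g$) are distinct elements of $\lat \cap K$, so $g(\vec{x}) \le |\lat \cap K|$. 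Combining this with the integral identity and $\vol(\mathcal{F}) = 1$ yields $2^{-n}\vol(K) = \int_{\mathcal{F}} g \le |\lat \cap K| \cdot \vol(\mathcal{F}) = |\lat \cap K|$, which is precisely $|\lat \cap rB_2^n| \ge 2^{-n}\vol(rB_2^n)$. The asymptotic $\approx (\pi e r^2/(2n))^{n/2}$ is then routine: substitute $\vol(rB_2^n) = \pi^{n/2} r^n / \Gamma(n/2+1)$ and apply Stirling to $\Gamma(n/2+1)$.

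I expect the only genuine subtlety to be the identity $S - S = K$, that is, recognizing that central symmetry together with convexity is exactly what converts a cluster of mutually $\lat$-congruent points of the half-ball into distinct lattice points of the full ball; the folding step is standard pigeonhole and the volume estimate is mechanical. The same argument, tracking that each nonzero difference $\vec{v}_i - \vec{v}_1$ and its negative both lie in $\lat \cap K$, sharpens the count to the integer bound $2\lfloor 2^{-n}\vol(rB_2^n)\rfloor + 1$ quoted in the footnote, and it goes through verbatim with any centrally symmetric convex body in place of $rB_2^n$.
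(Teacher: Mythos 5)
Your proof is correct. Note that the paper does not prove this theorem at all---it is quoted as a citation to van der Corput (with a pointer to~\cite{Lekkerkerker_book})---and your folding argument (unfold $\vol(S) = 2^{-n}\vol(K)$ over a fundamental domain, then convert a fiber of multiplicity $g$ into $g$ distinct points of $\lat \cap K$ via $S - S = \tfrac12 K + \tfrac12 K = K$) is exactly the standard Blichfeldt--van der Corput proof underlying that citation; every step, including the pointwise bound $g(\vec{x}) \leq |\lat \cap K|$ and the Stirling estimate, is sound. One small caveat: your closing claim that the argument sharpens ``verbatim'' to the footnote's bound $2\floor{2^{-n}\vol(rB_2^n)}+1$ glosses over a possible collision, since $\vec{v}_i - \vec{v}_1$ and $-(\vec{v}_j - \vec{v}_1)$ can coincide (precisely when $\vec{v}_i + \vec{v}_j = 2\vec{v}_1$); the standard fix is to order the $\vec{v}_i$ by a generic linear functional and take differences with the extreme point, so that the $g-1$ nonzero differences and their negatives are separated---but this concerns only the parenthetical footnote, not the statement you were asked to prove.
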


In~\cite{regevReverseMinkowskiTheorem2017}, we proved a partial converse to Theorem~\ref{thm:minkowski}, which was originally conjectured by Dadush~\cite{priv:Daniel,DR16}. In particular, we showed that any lattice $\lat \subset \R^n$ with many short points must have a \emph{sublattice} $\lat' \subseteq \lat$ with small determinant.\footnote{Notice that we cannot hope for the lattice itself to have small determinant, since a lattice can have large determinant but contain a dense sublattice. E.g., consider the lattice $\lat \subset \R^2$ generated by the vectors $(1/T, 0)$ and $(0,T^2)$ for arbitrarily large $T \gg 1$, which has arbitrarily large determinant $\det(\lat) = T$ but arbitrarily many short points $|\lat \cap r B_2^2| \geq 1 + 2\floor{rT}$.} We refer to results of this flavor as ``reverse Minkowski theorems.'' Here is the main theorem from~\cite{regevReverseMinkowskiTheorem2017} (stated in the contrapositive).

\begin{theorem}[\cite{regevReverseMinkowskiTheorem2017}]
    \label{thm:RM}
    For any lattice $\lat \subset \R^n$ with $\det(\lat') \geq 1$ for all sublattices $\lat' \subseteq \lat$,
    \[
        \Theta(\lat,i\tau) \leq 3/2
        \; ,
    \]
    where the lattice theta function is defined as 
    \[
        \Theta(\lat,i\tau) := \sum_{\vec{y} \in \lat} \exp(-\pi \tau \|\vec{y}\|^2)
        \; ,
    \]
    and $\tau \geq 100(\log n + 2)^2$.
\end{theorem}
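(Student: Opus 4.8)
The plan is to prove the equivalent statement $\sum_{\vec{y}\in\lat\setminus\{\vec0\}}\exp(-\pi\tau\|\vec{y}\|^2)\le 1/2$ by strong induction on the rank $n$, driven by a submultiplicative decomposition of $\Theta$ along sublattices, with a direct counting bound serving as the rigid/base case. First I would record the elementary consequences of the hypothesis. By homogeneity it suffices to treat the extremal normalization $\det(\lat)=1$, so that $\lat$ is \emph{semistable}: $\det(\lat')\ge1$ for every sublattice. Applying this to the rank-one sublattices $\langle\vec{y}\rangle$, whose determinant is $\|\vec{y}\|$, gives $\lambda_1(\lat)\ge1$; and since semistability is preserved under duality, the dual satisfies $\lambda_1(\lat^*)\ge1$ as well. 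The latter will let me bound the smoothing parameter of $\lat$ and is what makes the direct estimate possible.

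The engine of the induction is the following decomposition. Fix a primitive sublattice $\lat'\subseteq\lat$ of rank $k$, let $W=\spn(\lat')$, and let $\pi$ be the orthogonal projection onto $W^\perp$, so that $\pi(\lat)$ is a lattice of rank $n-k$. Grouping the points of $\lat$ according to their image under $\pi$---each fiber being a coset of $\lat'$ inside $W$---and using the standard fact that the Gaussian mass of a coset $\lat'+\vec{c}$ never exceeds that of $\lat'$ itself (immediate from Poisson summation), I obtain the clean submultiplicative bound
\[
    \Theta(\lat,i\tau)\le\Theta(\lat',i\tau)\cdot\Theta(\pi(\lat),i\tau)\; .
\]
Here $\lat'$ again satisfies the sublattice-determinant hypothesis (its sublattices are sublattices of $\lat$) and has smaller rank, so the inductive hypothesis applies to it directly.

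With this in hand I would split into two cases. In the \emph{rigid} case, every proper primitive sublattice of rank $k$ has determinant at least a threshold $T_k$ growing with $k$; then a successive-minima counting argument---fed by the determinant lower bounds on all sublattices (which lower-bound the products $\lambda_1\cdots\lambda_k$) together with $\lambda_1(\lat^*)\ge1$---bounds $|\lat\cap rB_2^n|$ and hence controls the tail of the theta sum, yielding $\Theta(\lat,i\tau)\le 3/2$ already for $\tau\gtrsim\log n$. In the remaining case there is a proper primitive $\lat'$ of small determinant, and I apply the decomposition and recurse on both factors.

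The main obstacle---and the reason the bound is $(\log n)^2$ rather than $\log n$---is the second factor $\pi(\lat)$. Since $\det(\pi(\lat))=1/\det(\lat')\le1$, the projection is \emph{not} semistable: its sublattices only satisfy $\det\ge1/\det(\lat')$, so the inductive hypothesis cannot be applied to it verbatim. To proceed I must rescale $\pi(\lat)$ to restore a determinant condition, which shifts the effective Gaussian parameter $\tau$, and then quantify how the theta bound degrades under this rescaling. Carrying this through, each split perturbs $\tau$ additively by $O(\log n)$, while a balanced recursion has depth $O(\log n)$; the delicate part is to choose the splitting sublattice so that the recursion stays balanced and to sum these $O(\log n)$ perturbations over the $O(\log n)$ levels without the accumulated error swamping the target constant $3/2$. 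Establishing this quantitative control over non-semistable projections---turning the heuristic submultiplicativity into a genuinely self-improving estimate---is where essentially all the work lies.
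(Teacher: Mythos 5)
Your route is genuinely different from the one behind this theorem: the statement is quoted from~\cite{regevReverseMinkowskiTheorem2017}, whose proof does not recurse on projections but instead follows the variational method of Shapira and Weiss~\cite{SW16} --- reduce to the compact set of stable lattices, take a maximizer of $\Theta(\cdot,i\tau)$ over that set, characterize the local maxima (boundary maxima split via $\Theta(\lat,i\tau)\le\Theta(\lat'\oplus\lat/\lat',i\tau)$ with both parts stable, interior maxima are analyzed through first- and second-order conditions and bounded directly) --- which is also the template the present paper adapts to $\zeta'$. Within your plan, the first concrete gap is the opening reduction: ``by homogeneity it suffices to treat $\det(\lat)=1$'' is false, because the hypothesis class is not closed under rescaling to determinant one. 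For $\lat=\Z\oplus 2\Z$ every sublattice has determinant at least $1$, but $2^{-1/2}\lat$ contains the rank-one sublattice $2^{-1/2}\Z$ of determinant $2^{-1/2}<1$; moreover, scaling down only \emph{increases} $\Theta$, so nothing is gained. The correct reduction to stable lattices is Dadush's contraction argument (Item~\ref{item:contraction} of Proposition~\ref{prop:stable}), a nontrivial lemma rather than homogeneity.

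Second, and more seriously, the induction as you set it up does not close, and the missing quantitative core is precisely the theorem. Submultiplicativity multiplies the bounds on the two factors, and your recursion tree has up to $n$ leaves, so an inductive hypothesis of the form $\Theta\le 3/2$ returns only $(3/2)^{m}$ with $m$ the number of leaves; you would need a strengthened hypothesis such as $\Theta(\lat,i\tau)\le\exp(\rank(\lat)\cdot\delta(\tau))$ with $\delta$ decaying fast enough, which you never formulate. On the quotient side, sublattices of $\pi(\lat)$ only satisfy $\det\ge 1/\det(\lat')$, and repairing this by scaling $\pi(\lat)$ up by $c=\det(\lat')$ (so that rank-$j$ sublattices get determinant at least $\det(\lat')^{j-1}\ge 1$) replaces the effective parameter $\tau$ by $\tau/c^{2}$: the loss is \emph{multiplicative} in $\tau$, not the additive $O(\log n)$ perturbation you assert. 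Your accounting therefore implicitly requires that one can always split along a sublattice with $\det(\lat')\le 1+O(\log n/\tau)$, i.e., extremely close to $1$, with the rigid counting case (for unspecified thresholds $T_k$, and with no worked estimate showing $\Theta\le 3/2$ at $\tau\gtrsim\log n$ there) covering everything else; none of this is established, and you say yourself that it is ``where essentially all the work lies.'' As it stands the proposal is a plausible program, not a proof, and it is not evident that this recursive route can reach the constant $3/2$ at $\tau=O((\log n)^2)$ at all --- which is presumably why the published proof takes the variational route instead.
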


In particular, for any lattice $\lat \subset \R^n$ as above and any $r > 0$, $|\lat \cap rB_2^n| \leq 3\exp(\pi \tau r^2)/2$.
Many other applications are shown in~\cite{DR16,regevReverseMinkowskiTheorem2017}. (See also~\cite{Bost20}.)

However, Theorem~\ref{thm:RM} seems unlikely to be tight. As far as we know, the worst case is the integer lattice $\lat = \Z^n$, which satisfies $\Theta(\Z^n,i\tau) = 3/2$ for $\tau = \log n/\pi + O(1)$. Indeed, the lattice $\Z^n$ arises naturally in this context, and a major open question raised in~\cite{regevReverseMinkowskiTheorem2017} is whether the integer lattice $\Z^n$ is actually the worst case for \emph{all} $\tau > 0$. In other words, is it the case that
\begin{equation}
    \label{eq:tight_RM}
    \Theta(\lat, i\tau) \leq \Theta(\Z^n,i\tau) 
\end{equation}
for all $\tau > 0$ and all lattices $\lat \subset \R^n$ such that all sublattices $\lat' \subseteq \lat$ have $\det(\lat') \geq 1$? We call Eq.~\eqref{eq:tight_RM} a \emph{tight} reverse Minkowski inequality. 

If we could prove Eq.~\eqref{eq:tight_RM} for all $\tau > 0$, we would immediately obtain similar inequalities for many other functions of interest. In this work, we are interested in the Epstein zeta function, defined as
\[
    \zf{\lat}{s} := \sum_{\substack{\vec{y} \in \lat\\ \vec{y} \neq \vec0}} \|\vec{y}\|^{-2s}
\]
for $s > n/2$. This function arises in a number of contexts (see, e.g.,~\cite{SS06}), and it can be written as
\begin{equation}
    \label{eq:zeta_from_theta_no_q}
    \zf{\lat}{s} = \frac{\pi^s}{\Gamma(s)} \int_0^\infty \tau^{s-1} (\Theta(\lat,i\tau)-1) \intd \tau
    \; .
\end{equation}
So, if we could prove Eq.~\eqref{eq:tight_RM} for all $\tau > 0$, then we would also obtain a proof that
\begin{equation}
    \label{eq:tight_zeta_no_q}
    \zf{\lat}{s} \leq \zf{\Z^n}{s}
    \; .
\end{equation}
We can therefore view a proof of Eq.~\eqref{eq:tight_zeta_no_q} as partial progress towards proving Eq.~\eqref{eq:tight_RM}. (We say more about the relationship between $\zeta$ and $\Theta$ in Section~\ref{sec:zeta_and_theta}.) 

We are also interested in the function
\[
    \qzf{\lat}{s}{q} := \sum_{\vec{y} \in \lat} (\|\vec{y}\|^2 + q)^{-s}
\]
for $q > 0$ and $s > n/2$. 
Notice that this can be thought of as a generalization of $\zf{\lat}{s}$, as 
\begin{equation}
    \label{eq:q_to_zero}
    \zf{\lat}{s} =  \lim_{q \to 0} \qzf{\lat}{s}{q} - q^{-s}
    \; .
\end{equation}
To conveniently handle both cases simultaneously, we will work with 
\[
    \qzfp{\lat}{s}{q} := \sum_{\substack{\vec{y} \in \lat\\ \vec{y} \neq \vec0}} (\|\vec{y}\|^2 + q)^{-s}
    \; 
\]
for any $q \geq 0$.
Notice that $\zf{\lat}{s} = \qzfp{\lat}{s}{0}$ and $\qzf{\lat}{s}{q} = \qzfp{\lat}{s}{q} + q^{-s}$ for $q>0$.

Our main result is then the following tight reverse Minkowski theorem for  $\qzfp{\lat}{s}{q}$. We also characterize the case of equality, showing that $\qzfp{\lat}{s}{q} = \qzfp{\Z^n}{s}{q}$ (for lattices $\lat$ with no dense sublattices) if and only if $\lat$ is isomorphic to $\Z^n$, i.e., if and only if $\lat = R\Z^n$ for some orthogonal transformation $R \in \mathsf{O}_n(\R)$.

\begin{theorem}
    \label{thm:zeta_RM_intro}
    For any lattice $\lat \subset \R^n$ with $\det(\lat') \geq 1$ for all sublattices $\lat' \subseteq \lat$, any $s > n/2$, and $0 \leq q \leq (2s-n)/(n+2)$, 
    \[
        \qzfp{\lat}{s}{q} \leq \qzfp{\Z^n}{s}{q}
        \; ,
    \]
    with equality if and only if $\lat$ is isomorphic to $\Z^n$.
\end{theorem}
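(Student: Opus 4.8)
The plan is to pass to the theta function via a Gamma-function (Laplace-transform) identity and reduce the claim to a single weighted positivity statement. Writing $(\length{\vec y}^2+q)^{-s} = \tfrac{\pi^s}{\Gamma(s)}\int_0^\infty \tau^{s-1} e^{-\pi\tau(\length{\vec y}^2+q)}\intd\tau$ and summing over $\vec y\in\lat\setminus\{\vec0\}$ (the interchange being justified by absolute convergence for $s>n/2$) generalizes Eq.~\eqref{eq:zeta_from_theta_no_q} to
\[
\qzfp{\lat}{s}{q} = \frac{\pi^s}{\Gamma(s)}\int_0^\infty \tau^{s-1} e^{-\pi\tau q}\big(\Theta(\lat,i\tau)-1\big)\intd\tau
\; .
\]
Subtracting the same identity for $\Z^n$ shows that the theorem is equivalent to
\[
\int_0^\infty \tau^{s-1} e^{-\pi\tau q}\, D(\tau)\intd\tau \geq 0
\; ,\qquad D(\tau) := \Theta(\Z^n,i\tau)-\Theta(\lat,i\tau)
\; .
\]
If one could show $D(\tau)\ge0$ for all $\tau>0$ — the conjectured tight inequality of Eq.~\eqref{eq:tight_RM} — the result would follow at once for every $q\ge0$; since that pointwise statement is open, the strategy must instead exploit the specific weight $\tau^{s-1}e^{-\pi\tau q}$ and the stated range of $q$.

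Next I would establish $D(\tau)\ge0$ in the two extreme regimes, where it does hold pointwise. For large $\tau$ the sum defining $\Theta(\lat,i\tau)-1$ is dominated by the shortest nonzero vectors: the hypothesis applied to $1$-dimensional sublattices forces $\lambda_1(\lat)\ge1$, and applied to the sublattice generated by any linearly independent set of minimal vectors forces (via Hadamard's inequality together with $\det(\lat')\ge1$) that such vectors are orthonormal. Hence $\lat$ has at most $2n$ vectors of length $1$, matching $\Z^n$, so $D(\tau)\ge0$ for all sufficiently large $\tau$, with equality there forcing $n$ mutually orthonormal minimal vectors and therefore $\lat=R\Z^n$. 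For small $\tau$, Poisson summation gives $\Theta(\lat,i\tau)=\tau^{-n/2}\det(\lat)^{-1}\Theta(\lat^*,i/\tau)$, so the leading term of $D(\tau)$ is $\tau^{-n/2}\big(1-\det(\lat)^{-1}\big)\ge0$ because $\det(\lat)\ge1$; the delicate unimodular case $\det(\lat)=1$ I would treat by observing that such a $\lat$ is semistable, hence has a semistable (again stable) dual, reducing the small-$\tau$ behavior to the already-understood large-$\tau$ behavior of $\lat^*$.

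The heart of the proof, and the main obstacle, is the intermediate regime, where $D(\tau)$ may be negative and no pointwise comparison is available. Here the weighting must carry the argument, and this is exactly where the constraint $0\le q\le(2s-n)/(n+2)$, equivalently $(n+2)(1+q)\le 2(s+1)$, enters. I would first pin down the threshold by a second-order analysis at $\lat=\Z^n$: perturbing the Gram matrix as $I+H$ and expanding $\qzfp{\cdot}{s}{q}$ to second order, the maximality of $\Z^n$ reduces, after imposing the determinant constraint, to inequalities comparing the lattice sums $c:=\sum_{\vec y\ne\vec0}(\length{\vec y}^2+q)^{-s-1}y_1^2$, $\ A:=\sum_{\vec y\ne\vec0}(\length{\vec y}^2+q)^{-s-2}y_1^4$, and $B:=\sum_{\vec y\ne\vec0}(\length{\vec y}^2+q)^{-s-2}y_1^2y_2^2$; the off-diagonal directions require $c\ge 2(s+1)B$ and the diagonal directions require $c\ge(s+1)(A-B)$. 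Strikingly, replacing these sums by their Gaussian continuum analogues yields $c=2(s+1)B$ and $A=3B$ exactly, so both conditions are \emph{saturated} in the continuum limit and their correct sign is governed entirely by the discrete correction, which is what the range $(n+2)(1+q)\le 2(s+1)$ controls (and why the constant is sharp). To upgrade this local rigidity to a global statement I would maximize $\qzfp{\cdot}{s}{q}$ over the normalized stable lattices, using the endpoint asymptotics above to guarantee the maximum is attained, and then combine the Hessian computation — valid throughout $(n+2)(1+q)\le2(s+1)$ — with the first-order (KKT) conditions and an induction over the Grayson--Stuhler filtration of lattices with an active determinant constraint to rule out every critical point except $\Z^n$.

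Finally, the equality characterization follows by tracing the inequalities backwards: equality in the theorem forces $D(\tau)=0$ for almost every $\tau$, in particular for large $\tau$, which by the minimal-vector analysis forces $n$ mutually orthonormal unit vectors in $\lat$ and hence $\lat=R\Z^n$. I expect the global step of the third paragraph to be the genuinely hard part: with the pointwise theta inequality unavailable, one must convert sharp local convexity at $\Z^n$ into a global maximum, and it is precisely there that the exact threshold $(2s-n)/(n+2)$ is both exploited and shown to be best possible.
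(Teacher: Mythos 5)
There are genuine gaps, and they sit exactly where you locate the difficulty. First, your global step is asserted rather than proved, and the sketch omits the two ingredients that make it work in the paper's actual argument. (i) To rule out an interior maximizer of $\qzfp{\cdot}{s}{q}$ over the stable lattices, you need the Laplacian of $A \mapsto \qzfp{\lat_1 \oplus e^{A/2}\lat_2}{s}{q}$ over trace-zero symmetric $A$ to be strictly positive at \emph{every} stable lattice with $\rank(\lat_2) \geq 2$, not only at $\Z^n$; your Hessian computation is carried out only at $\Z^n$, and the claim that it is ``valid throughout $(n+2)(1+q)\le 2(s+1)$'' is unsupported. The paper's proof of this (Corollary~\ref{cor:laplace_positive}) is not a continuum-limit heuristic: after reorganizing the Laplacian (Lemma~\ref{lem:laplace_computation}) into a comparison between $\qzf{\lat_1}{s+2}{\|\vec{y}\|^2+q}$ and $\qzf{\lat_1}{s+1}{\|\vec{y}\|^2+q}$, one needs the genuinely nontrivial inequality $\qzf{\lat_1}{s+1}{q'} \geq \frac{s-n'/2}{q's}\,\qzf{\lat_1}{s}{q'}$ (Corollary~\ref{cor:zeta_s-2_inequality}), proved via Poisson summation and the Bessel recurrence $\Kbar_\alpha(x) \geq (\alpha-1)\Kbar_{\alpha-1}(x)$ (Lemma~\ref{lem:zeta_recurrence_inequality}); your ``discrete correction'' remark gestures at this but supplies no proof, and the argument also uses $\lambda_1(\lat_2)\geq 1$, i.e.\ stability, to land in the stated $q$-range. (ii) At a \emph{boundary} maximizer, ``KKT conditions plus induction over the filtration'' has no teeth unless you know the splitting inequality $\qzfp{\lat}{s}{q} \leq \qzfp{(\lat/\lat')\oplus\lat'}{s}{q}$, strict unless $\lat \cong (\lat/\lat')\oplus\lat'$ (Lemma~\ref{lemma:zeta_directSum}, from nonnegativity of the Fourier transform of $(\|\cdot\|^2+q)^{-s}$). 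That lemma, combined with the structure of the stable boundary (Item~\ref{item:stable_quotient_boundary} of Proposition~\ref{prop:stable}) and the decomposition induction (Claim~\ref{claim:decompose}, Corollary~\ref{cor:decomposeZn}), is what forces a maximizer to be $\Z^n$; nothing in your sketch substitutes for it. You also silently reduce from the hypothesis $\det(\lat')\geq 1$ to stable lattices, which requires the contraction argument of Item~\ref{item:contraction} of Proposition~\ref{prop:stable}.

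Second, your equality characterization is circular. From equality in the theorem you conclude $\int_0^\infty \tau^{s-1}e^{-\pi q\tau}D(\tau)\intd\tau = 0$ and then assert $D(\tau)=0$ for almost every $\tau$; but this requires $D(\tau)\geq 0$ pointwise, which is exactly the open conjecture Eq.~\eqref{eq:tight_RM} that the paper explicitly does not prove (and which, per~\cite{HeimendahlMTVZ21}, cannot be established for all $\tau$ by these local-maximum techniques). So the equality case does not close as written; in the paper it follows instead from the strictness in Lemma~\ref{lemma:zeta_directSum} and the ``furthermore'' clause of Item~\ref{item:contraction}. Relatedly, your first two paragraphs (the large-$\tau$ and small-$\tau$ regimes of $D$) are correct in themselves but are never used: with no mechanism to control the intermediate regime, they contribute nothing to the weighted integral inequality, and the proof must, as you concede, run entirely through the variational argument whose key lemmas are missing.
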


\subsection{Our techniques}

In order to prove Theorem~\ref{thm:zeta_RM_intro}, we first restrict our attention to \emph{stable} lattices, which are lattices $\lat \subset \R^n$ such that $\det(\lat) = 1$ and $\det(\lat') \geq 1$ for all sublattices $\lat' \subseteq \lat$. Theorem~\ref{thm:zeta_RM_intro} follows easily once we have proven it for the special case of stable lattices. (See Item~\ref{item:contraction} of Proposition~\ref{prop:stable}.) The set of stable lattices is a \emph{compact} subset of the set of all lattices (under the quotient topology of $\mathsf{SL}_{n}(\R)/\mathsf{SL}_{n}(\Z)$), so that $\qzfp{\lat}{s}{q}$ must achieve its maximum over this set. It therefore suffices to assume that our lattice $\lat \subset \R^n$ maximizes $\qzfp{\lat}{s}{q}$ over the set of stable lattices, and then to prove that any such maximizer must equal $\Z^n$ (up to isomorphism, but we ignore this distinction in this high-level discussion).

Indeed, $\qzfp{\lat}{s}{q}$ is a particularly nice function for this purpose because it has positive Laplacian for $q$ sufficiently small. (In fact, it is known that $\zf{\lat}{s}$
    is an eigenfunction of the Laplacian, with eigenvalue $s(1-1/n)(s-n/2) > 0$~\cite{SS06}.)
    In other words, the expectation of $\qzfp{\lat'}{s}{q}$ for a small random perturbation $\lat'$ of $\lat$ is larger than $\qzfp{\lat}{s}{q}$.
    This would contradict the assumption that $\lat$ maximizes $\zeta'$ over the set of stable lattices \emph{unless} $\lat$ lies on the \emph{boundary} of this set. So, $\lat$ must lie on the boundary of the set of stable lattices, i.e., $\lat$ must have a non-trivial sublattice $\lat' \subset \lat$ such that $\det(\lat') = 1$. 
    In fact, we can show (using the fact that the Fourier transform of
    $\vec{y} \mapsto (\|\vec{y}\|^2 + q)^{-s}$
     is non-negative) that our global maximizer $\lat$ must actually be a direct sum $\lat = \lat' \oplus \lat''$.
    
     In other words, any global maximizer $\lat$ must share a key property with $\Z^n$, in that it must be expressible as a non-trivial direct sum $\lat = \lat' \oplus \lat''$. To prove that $\lat$ must actually be equal to $\Z^n$, we argue that for any such decomposition $\lat = \lat' \oplus \lat''$ of our global maximizer, either (1) $\lat''$ has rank one, or (2) $\lat''$ can itself be written as a direct sum of lower-dimensional lattices. This is sufficient to prove that $\lat$ is $\Z^n$. 
     To show this, we use an argument like the one above. In particular, it suffices to prove that the function $\qzfp{\lat' \oplus \lat''}{s}{q}$ has a positive Laplacian, as a function of $\lat''$ (provided that $\rank(\lat'') > 1$). 
     To that end, using the Poisson Summation Formula (Theorem~\ref{thm:PSF}), we write this Laplacian as a linear combination of modified Bessel functions of the second kind $K_{\alpha}(x)$ with different parameters $\alpha$. We then apply a recurrence relation, which relates $K_{\alpha}(x)$ to $K_{\alpha-1}(x)$ (Lemma~\ref{lem:zeta_recurrence_inequality}) and in particular implies that this Laplacian is positive.

\subsection{Relationship with the theta function and potential ways forward}
\label{sec:zeta_and_theta}

As mentioned above, we view Theorem~\ref{thm:zeta_RM_intro} as progress towards proving a tight reverse Minkowski theorem like Eq.~\eqref{eq:tight_RM} for all $\tau > 0$. Indeed, generalizing Theorem~\ref{thm:zeta_RM_intro} to all $q \geq 0$ (as opposed to $0 \leq q \leq (2s-n)/(n+2)$) is equivalent to proving such a result. This follows immediately from the identities
\begin{equation}
    \label{eq:zeta_from_theta}
    \qzf{\lat}{s}{q} = \frac{\pi^s}{\Gamma(s)} \int_0^\infty \tau^{s-1}\exp(-\pi q \tau) \Theta(\lat,i\tau) \intd \tau
    \; ,
\end{equation}
(see Eq.~\eqref{eq:zeta_from_theta_pointwise} below) and
\begin{equation}
    \label{eq:theta_from_zeta}
    \Theta(\lat,i\tau) = \lim_{s \to \infty} (\pi \tau/s)^{-s} \qzf{\lat}{s}{s/(\pi \tau)}
    \; .
\end{equation}

In fact, Eq.~\eqref{eq:theta_from_zeta} together with Theorem~\ref{thm:zeta_RM_intro} immediately shows that
\[
    \Theta(\lat,i\tau) \leq \Theta(\Z^n,i\tau)
\]
    for any lattice $\lat \subset \R^n$ with $\det(\lat') \geq 1$ for all sublattices $\lat' \subseteq \lat$ and any  $\tau \geq (n+2)/(2\pi)$. This exact result was already proven in~\cite{regevReverseMinkowskiTheorem2017} (using similar techniques applied directly to $\Theta(\lat,i \tau)$), but the above gives an alternative proof.

However, it seems that our techniques cannot be directly extended to prove Eq.~\eqref{eq:tight_RM} for arbitrary $\tau > 0$, or equivalently, to prove Theorem~\ref{thm:zeta_RM_intro} for all $q \geq 0$. Indeed,  Heimendahl et al.\ recently showed that there exist $\tau > 0$ and stable lattices $\lat \subset \R^n$ that maximize $\Theta(\lat',i\tau)$ in a neighborhood of $\lat$~\cite{HeimendahlMTVZ21}. Since our proof crucially relies on our ability to rule out such local maxima, we cannot hope to use similar techniques to prove Eq.~\eqref{eq:tight_RM} for such $\tau$. 

The proof techniques in~\cite{regevReverseMinkowskiTheorem2017} (which were originally used by Shapira and Weiss in a slightly different context~\cite{SW16}) do offer a potential way forward. The idea is to characterize any such local maxima $\lat$ and to use this characterization to directly bound $\Theta(\lat,i\tau)$. In fact, the local maxima found in~\cite{HeimendahlMTVZ21} are very special lattices with rich mathematical structure (they are the $32$-dimensional even unimodular lattices with no roots, and they share the same theta function, which is a modular form). Perhaps one can show that some properties of these special lattices must be shared by any local maximum $\lat$, and then use this to directly bound $\Theta(\lat,i\tau)$.

\subsection*{Acknowledgments}

We thank Stephen D. Miller and Barak Weiss for helpful discussions.

\section{Preliminaries}

\subsection{Lattices}
A \emph{lattice} $\lat \subset \R^n$ of rank $d$ is the set of integer linear combinations of linearly independent basis vectors $\basis := (\vec{b}_1,\ldots, \vec{b}_d) \in \R^{n \times d}$,
\[
\lat = \lat(\basis) := \Big\{ \sum_{i=1}^d a_i \vec{b}_i \ : \ a_i \in \Z \Big\}
\; .
\]
We typically implicitly treat lattices as though they are full rank (i.e., $d = n$) by implicitly identifying $\spn(\lat)$ with $\R^d$.
The \emph{dual lattice}
\[
\lat^* := \{ \vec{w} \in \spn(\lat) \ : \ \forall \vec{y} \in \lat,\ \inner{\vec{w}, \vec{y}} \in \Z\}
\]
is the set of all vectors in the span of $\lat$ that have integer inner products with all lattice vectors. 
One can check that $\lat^{**} = \lat$ and that $\lat^*$ is generated by the basis $\basis^* := \basis (\basis^T \basis)^{-1}$.

We say that lattices $\lat_1, \lat_2 \subset \R^n$ are \emph{isomorphic} if there exists an orthogonal transformation $R\in \mathsf{O}_n(\R)$ such that $R\lat_2 = \lat_1$. We write this as $\lat_1 \cong \lat_2$.

We write
\[
\lambda_1(\lat) := \min_{\vec{y} \in \lat \setminus \{\vec0\}} \|\vec{y}\|
\]
for the length of the shortest non-zero lattice vector.

The \emph{determinant} of the lattice is given by $\det(\lat) := \sqrt{\det(\basis^T \basis)}$, or simply $|\det(\basis)|$ in the full-rank case. We also define $\det(\{\vec0\}) := 1$ for convenience. One can show that the determinant is well defined (i.e., it does not depend on the choice of basis $\basis$). It follows that if $\lat \subset \R^n$ is a full-rank lattice and $A \in \R^{n\times n}$ is non-singular, then $\det(A \lat) = |\det(A)| \det(\lat)$, and that $\det(\lat^*) = 1/\det(\lat)$.

A \emph{sublattice} $\lat' \subseteq \lat$ is an additive subgroup of $\lat$. We say that $\lat'$ is \emph{primitive} if $\lat' = \lat \cap \spn(\lat')$. 
For a primitive sublattice $\lat' \subseteq \lat$, we define the quotient lattice $\lat/\lat' := \pi_{\lat^{\prime \perp}}(\lat)$ to be the projection of $\lat$ onto the space orthogonal to $\spn(\lat')$. In particular, $\lat/\lat'$ is a lattice with $\rank(\lat/\lat') = \rank(\lat) - \rank(\lat')$, and we have the identities $(\lat/\lat')^* = \lat^* \cap \spn(\lat')^\perp$ and $\det(\lat/\lat') = \det(\lat)/\det(\lat')$. 

We will need the following theorem, which is a version of the Poisson Summation Formula.

\begin{theorem}[{\cite[Theorem A.1]{millerKissingNumbersTransference2019}}]
\label{thm:PSF}
For any $f : \R^n \to \C$ with Fourier transform $\widehat{f}$ and any lattice $\lat \subset \R^n$
    \[\sum_{\vec{y} \in \lat} f(\vec{y}) = \frac{1}{\det(\lat)} \cdot \sum_{\vec{w} \in \lat^*} \widehat{f}(\vec{w})
    \; ,
    \]
    provided that (1) $f$ is continuous; (2) $|f(\vec{x})| = O((1+\|\vec{x}\|)^{-n-\delta})$ for some $\delta > 0$; and (3) the right-hand side converges absolutely.
\end{theorem}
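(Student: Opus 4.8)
The plan is to reduce the statement to the case $\lat = \Z^n$ and then prove that case by the classical periodization argument, tracking carefully where each of the three hypotheses is used. Throughout I use the convention $\widehat{f}(\vec{\xi}) = \int_{\R^n} f(\vec{x}) e^{-2\pi i \inner{\vec{x},\vec{\xi}}} \intd \vec{x}$, which is the one consistent with the factor $1/\det(\lat)$ and the integrality definition of $\lat^*$. First I would reduce to $\Z^n$: writing $\lat = \basis \Z^n$ and setting $g(\vec{x}) := f(\basis \vec{x})$, we have $\sum_{\vec{y} \in \lat} f(\vec{y}) = \sum_{\vec{z} \in \Z^n} g(\vec{z})$, and a linear change of variables in the defining integral gives $\widehat{g}(\vec{\xi}) = \det(\lat)^{-1} \widehat{f}((\basis^{-1})^T \vec{\xi})$. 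Since $\lat^* = (\basis^{-1})^T \Z^n$ and $\det(\lat) = |\det(\basis)|$, the $\Z^n$-identity $\sum_{\vec{z} \in \Z^n} g(\vec{z}) = \sum_{\vec{m} \in \Z^n} \widehat{g}(\vec{m})$ rearranges exactly into the claimed identity. The change of variables preserves all three hypotheses (continuity, polynomial decay of the stated order, and absolute convergence of the transform side) up to harmless constants, so it suffices to treat $\lat = \Z^n$.

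Next I would build the periodization. Hypothesis (2) forces $f \in L^1(\R^n)$, so $\widehat{f}$ is well defined, bounded, and continuous; moreover the same decay makes the series $F(\vec{x}) := \sum_{\vec{m} \in \Z^n} f(\vec{x} + \vec{m})$ converge absolutely and uniformly on compact sets, and together with hypothesis (1) this shows $F$ is a continuous $\Z^n$-periodic function. I would then compute the Fourier coefficients of $F$ by unfolding: interchanging sum and integral (justified by the absolute convergence coming from hypothesis (2), via Fubini) and using $e^{-2\pi i \inner{\vec{w},\vec{m}}} = 1$ for $\vec{m} \in \Z^n$, one obtains
\[
    c_{\vec{w}} = \int_{[0,1]^n} F(\vec{x}) e^{-2\pi i \inner{\vec{w},\vec{x}}} \intd \vec{x} = \int_{\R^n} f(\vec{x}) e^{-2\pi i \inner{\vec{w},\vec{x}}} \intd \vec{x} = \widehat{f}(\vec{w})
    \; .
\]
Thus the Fourier coefficients of the continuous periodic function $F$ are precisely the transform values $\widehat{f}(\vec{w})$ appearing on the right-hand side.

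The hard part is the final step: passing from the Fourier coefficients back to the value $F(\vec0)$. One cannot simply write $F(\vec0) = \sum_{\vec{w}} c_{\vec{w}}$, because the Fourier series of a merely continuous function need not converge pointwise. This is exactly where hypothesis (3) is essential. Since it guarantees $\sum_{\vec{w} \in \Z^n} |\widehat{f}(\vec{w})| < \infty$, the series $\sum_{\vec{w}} c_{\vec{w}} e^{2\pi i \inner{\vec{w},\vec{x}}}$ converges absolutely and uniformly to some continuous function $G$. Because $G$ and $F$ are continuous with identical Fourier coefficients, uniqueness of Fourier coefficients for continuous functions on the torus gives $G \equiv F$. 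Evaluating at $\vec{x} = \vec0$ then yields $\sum_{\vec{m} \in \Z^n} f(\vec{m}) = F(\vec0) = G(\vec0) = \sum_{\vec{w} \in \Z^n} \widehat{f}(\vec{w})$, which is the $\Z^n$ case and, combined with the reduction above, completes the proof.

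I expect the convergence of the Fourier series at the origin to be the main obstacle: it is the one point where absolute summability of the transform side (hypothesis (3)) cannot be dispensed with, and the clean way around pointwise divergence is to obtain uniform convergence first and then invoke uniqueness of Fourier coefficients, rather than attempting any direct summation. A secondary subtlety worth checking is that the decay exponent $n + \delta$ in hypothesis (2) is exactly what is needed for the periodization $F$ to be well defined and continuous (plain $\|\vec{x}\|^{-n}$ decay would fail), so the quantitative form of that hypothesis should be used, not just integrability.
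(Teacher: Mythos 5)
Your proof is correct, but note that the paper offers no proof of its own to compare against: Theorem~\ref{thm:PSF} is imported by citation from~\cite[Theorem A.1]{millerKissingNumbersTransference2019}. Your argument --- reduction to $\Z^n$ by a linear change of variables, periodization $F(\vec{x}) = \sum_{\vec{m}} f(\vec{x}+\vec{m})$ made continuous by the $n+\delta$ decay, unfolding via Fubini to identify the Fourier coefficients, and then using hypothesis~(3) for uniform convergence plus uniqueness of Fourier coefficients of continuous functions on the torus --- is the standard proof under exactly these weak hypotheses, and it is essentially the argument given in the cited reference, including the correct identification of where each of the three hypotheses is indispensable.
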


\subsection{Stable lattices}

We say that a lattice $\lat \subset \R^n$ is \emph{stable} if $\det(\lat) = 1$ and $\det(\lat') \geq 1$ for all sublattices $\lat' \subseteq \lat$.
Next, we describe some useful properties of stable lattices.

\begin{proposition}\label{prop:stable}
 The following all hold.
\begin{enumerate}
	\item \label{item:stable_compact} The set of all stable lattices is compact (under the quotient topology of $\mathsf{SL}_n(\R)/\mathsf{SL}_n(\Z)$). 
	\item \label{item:stable_direct_sum} The direct sum of stable lattices is stable.
	\item \label{item:stable_quotient_boundary} A lattice $\lat \subset \R^n$ is on the boundary of the set of stable lattices if and only if $\lat$ is stable and there is a primitive sublattice $\lat' \subset \lat$ with $0 < \rank(\lat') < n$ such that $\lat'$ and $\lat/\lat'$ are both stable.
	\item \label{item:contraction} If $\lat \subset \R^n$ is a full-rank lattice such that $\det(\lat') \geq 1$ for all sublattices $\lat' \subseteq \lat$, then there exists a contracting linear transformation $A$ (i.e., a linear transformation such that $\|A\vec{x}\| \leq \|\vec{x}\|$ for all $\vec{x} \in \R^n$) such that $A\lat$ is stable. Furthermore, if $\det(\lat) > 1$, then there exists $\vec{y} \in \lat$ such that $\|A\vec{y}\| < \|\vec{y}\|$.
\end{enumerate}
    
\end{proposition}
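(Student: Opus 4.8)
For Item~\ref{item:stable_compact} the plan is to invoke Mahler's compactness criterion. The key observation is that every stable lattice satisfies $\lambda_1(\lat) \geq 1$, since any nonzero $\vec{y} \in \lat$ generates a rank-one sublattice of determinant $\|\vec{y}\|$, which stability forces to be at least $1$; this uniform lower bound yields precompactness. Closedness I would get by lifting a convergent sequence $\lat_k \to \lat$ to convergent bases $\basis_k \to \basis$: any sublattice $\lat' \subseteq \lat$ generated by vectors $\basis \vec{z}_i$ with $\vec{z}_i \in \Z^n$ is tracked by the sublattices $\lat'_k \subseteq \lat_k$ generated by $\basis_k \vec{z}_i$, whose determinants converge to $\det(\lat')$, so $\det(\lat') \geq 1$. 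For Item~\ref{item:stable_direct_sum}, write $\lat_1 \oplus \lat_2$ with $\lat_1 \subset \R^{n_1}$ and $\lat_2 \subset \R^{n_2}$ orthogonal; then $\det(\lat_1 \oplus \lat_2) = 1$, and for any sublattice $\M$ I would set $\M_1 := \M \cap \R^{n_1}$ (which is primitive in $\M$) and note that the projection $\pi_{\R^{n_2}}(\M)$ onto the second factor is a sublattice of $\lat_2$. The identity $\det(\M) = \det(\M_1)\det(\M/\M_1)$, combined with the fact that the orthogonal projection onto $\R^{n_2}$ is rank-preserving on $\M/\M_1$ and hence does not increase determinant, gives $\det(\M) \geq \det(\M_1)\det(\pi_{\R^{n_2}}(\M)) \geq 1$ by stability of the two factors.

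\textbf{Item 3.} I would argue both directions. For the ``if'' direction, given a primitive $\lat'$ with $d := \rank(\lat')$, $0 < d < n$, and $\lat', \lat/\lat'$ both stable (so $\det(\lat') = \det(\lat/\lat') = 1$), I would exhibit a degenerating family $A_\eps \in \mathsf{SL}_n(\R)$ that scales $\spn(\lat')$ by $1-\eps$ and $\spn(\lat')^\perp$ by $(1-\eps)^{-d/(n-d)}$; then $A_\eps \lat \to \lat$ while $\det(A_\eps \lat') = (1-\eps)^d < 1$, so each $A_\eps \lat$ is non-stable and $\lat$ lies on the boundary. For the ``only if'' direction, a boundary lattice $\lat$ is stable (the set is closed by Item~\ref{item:stable_compact}) and is a limit of non-stable unimodular lattices $\lat_k$, each carrying a primitive destabilizing sublattice $\M_k$ with $\det(\M_k) < 1$ and $0 < \rank(\M_k) < n$. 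Passing to a subsequence of constant rank with convergent span (compactness of the Grassmannian) and applying Mahler's criterion to the $\M_k$ (whose $\lambda_1$ is bounded below and whose determinant is bounded), I extract a limit sublattice $\lat' \subseteq \lat$ with $\det(\lat') \leq 1$; stability of $\lat$ then forces $\det(\lat') = 1$ and primitivity of $\lat'$. Finally $\lat'$ is stable because its sublattices are sublattices of $\lat$, and $\lat/\lat'$ is stable because the identity $\det(\tilde{\NN}) = \det(\lat')\det(\NN)$ relates any sublattice $\NN \subseteq \lat/\lat'$ to a sublattice $\tilde{\NN} \subseteq \lat$ of the same determinant.

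\textbf{Item 4.} This is the crux. The plan is a compactness argument over the set $S := \{A : \|A\| \leq 1,\ \det(A\lat') \geq 1 \text{ for all } \lat' \subseteq \lat\}$, which is nonempty ($I \in S$ by hypothesis) and compact (an intersection of closed conditions inside the operator-norm unit ball). I would minimize $\det(A\lat) = |\det A|\det(\lat)$ over $S$, let $A^*$ attain the minimum, set $\lat^* := A^*\lat$, and claim $\lat^*$ is stable, i.e.\ $\det(\lat^*) = 1$. Suppose instead $\det(\lat^*) > 1$, and call a primitive sublattice $\M \subseteq \lat^*$ \emph{tight} if $\det(\M) = 1$. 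Using submodularity of $\log\det$ over the lattice of primitive sublattices, namely $\det(\M_1 \cap \M_2)\det(\overline{\M_1 + \M_2}) \leq \det(\M_1)\det(\M_2)$, the tight sublattices are closed under intersection and primitive sum. Were they to span $\R^n$, their join would be a full-rank primitive sublattice of determinant $1$, hence all of $\lat^*$, forcing $\det(\lat^*) = 1$, a contradiction. So the tight sublattices span a proper subspace $V$, and contracting $\lat^*$ slightly in $V^\perp$ fixes every tight sublattice while, by Mahler finiteness of the finitely many non-tight sublattices of bounded determinant, keeping all determinants $\geq 1$; this produces a point of $S$ of strictly smaller determinant, contradicting minimality. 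Hence $\det(\lat^*) = 1$ and $A = A^*$ works. For the ``furthermore,'' if $\det(\lat) > 1$ then $|\det A^*| < 1$, so $A^*$ has a singular value below $1$; since $\lat$ is full-rank it is not contained in the subspace on which $A^*$ preserves norm, so some $\vec{y} \in \lat$ satisfies $\|A^*\vec{y}\| < \|\vec{y}\|$.

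I expect Item~\ref{item:contraction} to be the main obstacle: proving that a stabilizing contraction exists at all rests on the submodularity of $\log\det$ and on the delicate spanning-versus-contraction dichotomy above. A secondary difficulty is the Mahler-compactness extraction of a limiting destabilizer in the ``only if'' direction of Item~\ref{item:stable_quotient_boundary}, where care is needed to ensure the extracted sublattice is genuinely a sublattice of the limit $\lat$.
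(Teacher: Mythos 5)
Your proposal is correct, and at the crux (Item~\ref{item:contraction}) it takes a genuinely different route from the paper. Note first that the paper does not reprove Items~\ref{item:stable_compact}--\ref{item:stable_quotient_boundary} at all --- it imports them from \cite[Proposition 2.5]{regevReverseMinkowskiTheorem2017} --- so your Mahler-criterion compactness argument, the projection argument for direct sums (where your claim that $\pi_{\R^{n_2}}$ is rank-preserving on $\M/\M_1$ holds because the two quotient maps on $\M$ have the same kernel $\M_1 = \M \cap \R^{n_1}$), and the degenerating-family/limit-extraction argument for the boundary characterization are reconstructions of the cited material rather than of anything in this paper, and they are sound. For Item~\ref{item:contraction} the paper's proof is constructive and iterative: it fixes a sublattice $\lat_1$ of maximal rank $k$ with $\det(\lat_1)=1$, picks the strict superlattice $\lat_2 \supset \lat_1$ minimizing $\det(\lat')^{1/\rank(\lat'/\lat_1)}$, and applies the contraction acting as the identity on $\spn(\lat_1)$ and scaling $\spn(\lat_1)^\perp$ down by $\det(\lat_2)^{1/\rank(\lat_2/\lat_1)}$; the submodularity inequality $\det(A(\lat'+\lat_1))\det(A(\lat'\cap\lat_1)) \leq \det(A\lat_1)\det(A\lat')$ then certifies that all determinants stay at least $1$ while the rank of the largest determinant-one sublattice strictly increases, so at most $n$ iterations yield a stable image. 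Your route is instead variational: minimize $\det(A\lat)$ over the compact set $S$ of admissible contractions and rule out $\det(\lat^*)>1$ by showing that the tight sublattices span a proper subspace $V$ and contracting $V^\perp$. Both proofs hinge on the same submodularity of $\log\det$, but where the paper uses it once to verify admissibility of an explicit contraction, you use it structurally (closure of tight sublattices under intersection and primitive join). What each buys: the paper's iteration is shorter and makes the ``furthermore'' immediate, since whenever $\det(\lat)>1$ at least one step strictly contracts a nonzero subspace $\spn(\lat_1)^\perp$, which must contain a component of some $\vec{y} \in \lat$; your version trades the careful choice of $\lat_2$ for two facts that should be stated explicitly --- closedness of $S$ (each constraint $A \mapsto \det(A\lat') = \sqrt{\det(\basis'^T A^T A \basis')} \geq 1$ is closed), and finiteness of the primitive sublattices of $\lat^*$ of determinant at most, say, $2$, which follows from $\lambda_1(\lat^*) \geq 1$, Minkowski's second theorem, and discreteness, rather than literally from Mahler's criterion --- while your singular-value proof of the ``furthermore'' (the equality set $\{\vec{x} : \|A^*\vec{x}\| = \|\vec{x}\|\}$ is the eigenvalue-$1$ eigenspace of $(A^*)^T A^*$, a proper subspace, which cannot contain a full-rank lattice) is clean and fully rigorous. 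The two difficulties you flag at the end are exactly the right ones, and your sketches of how to resolve them go through.
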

\begin{proof}
    Items~\ref{item:stable_compact},~\ref{item:stable_direct_sum}, and~\ref{item:stable_quotient_boundary} are all taken directly from~\cite[Proposition 2.5]{regevReverseMinkowskiTheorem2017}. 
    Item~\ref{item:contraction} is used implicitly  throughout~\cite{regevReverseMinkowskiTheorem2017} (as was originally observed by Dadush~\cite{priv:Daniel17}). We therefore only provide a brief proof sketch of Item~\ref{item:contraction}.
    
   Let $\lat \subset \R^n$ be a lattice with $\det(\lat') \geq 1$ for all sublattices $\lat' \subseteq \lat$. Let $\lat_1 \subset \lat$ be a sublattice with maximal rank $k$ such that $\det(\lat_1) = 1$. (If no non-zero sublattice with determinant one exists, we take $\lat_1 := \{\vec0\}$ and $k = 0$, and we adopt the convention that $\det(\{\vec0\}) = 1$.) If $k = n$, then $\lat$ is stable and we are done.
   Otherwise, it suffices to show how to apply a contracting linear transformation $A$ to $\lat$ such that $A\lat$ still contains no sublattice with determinant less than one, but $A\lat$ contains a sublattice with determinant exactly one whose rank is strictly larger than $k$. (After applying this procedure at most $n$ times, the resulting lattice will be stable.)

    To show this, let $\lat_2 \subseteq \lat$ be the strict superlattice of $\lat_1$ (i.e., $\lat_1 \subset \lat_2$) that minimizes the function $
    \lat' \mapsto \det(\lat')^{1/\rank(\lat'/\lat_1)}$. (It follows from the fact that $\lat$ is discrete that this function achieves its minimum.) We will take $A$ to be the linear transformation that acts as the identity on $\spn(\lat_1)$ but scales $\spn(\lat_1)^\perp$ down by a factor of $\det(\lat_2)^{1/\rank(\lat_2/\lat_1)} > 1$. 
    
    By construction, we have that $A$ is a contraction with $\det(A\lat_2) = 1$. So, we only need to show that for any $\lat' \subseteq \lat$, $\det(A\lat') \geq 1$. By~\cite[Lemma 2.4]{regevReverseMinkowskiTheorem2017}, we have that 
    \[
        \det(A(\lat' + \lat_1))\det(A(\lat' \cap \lat_1)) \leq \det(A\lat_1) \det(A\lat')
        \; .
        \]
        Since $A$ acts as identity on $\spn(\lat_1)$, we see that $\det(A(\lat' \cap \lat_1)) = \det(\lat' \cap \lat_1) \geq 1$ and $\det(A\lat_1) = \det(\lat_1) = 1$. Moreover, by our choice of $\lat_2$ and $A$, we must have that $\det(A\lat'') \geq 1$ for any $\lat''$ with $\lat_1 \subseteq \lat''$. In particular, $\det(A(\lat' + \lat_1)) \geq 1$. We conclude that $\det(A\lat') \geq 1$, and the result follows.
\end{proof}

\subsection{Decompositions of lattices}
We call a lattice \emph{decomposable} if it is isomorphic to the direct sum of  non-trivial lattices with \emph{strictly} lower rank. Otherwise, we say it is \emph{indecomposable}. Notice that we may always write $\lat \cong \lat_1 \oplus \lat_2$ where $\lat_2$ is indecomposable (where $\lat_1$ might be the trivial lattice $\{\vec0\}$), as in the following claim.

\begin{claim}\label{claim:decompose}
For any non-trivial lattice $\lat\subset\R^n$ with rank $k$, there exist lattices $\lat_1,\lat_2$ such that $\lat\cong\lat_1\oplus\lat_2$ where $\lat_2$ is non-trivial and indecomposable (and $\lat_1$ could be the trivial lattice $\{\vec0\}$). If all such decompositions of a lattice $\lat\cong\lat_1\oplus\lat_2$ have $\rank(\lat_2)=1$, then $\lat\cong \alpha_1\Z\oplus\cdots\oplus\alpha_k \Z$ for some $\alpha_i\in\R$.
\end{claim}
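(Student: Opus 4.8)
The plan is to prove the two assertions separately, each by induction on the rank $k = \rank(\lat)$, relying only on the fact that the orthogonal direct sum $\oplus$ is associative and commutative up to isomorphism and that ranks add under $\oplus$. For the existence statement I would induct on $k$: when $k = 1$ the lattice is indecomposable (a proper summand would have rank $0$), so I take $\lat_1 = \{\vec0\}$ and $\lat_2 = \lat$. For $k \geq 2$, if $\lat$ is indecomposable I again take $\lat_1 = \{\vec0\}$, $\lat_2 = \lat$; otherwise $\lat \cong \M_1 \oplus \M_2$ with both $\M_i$ non-trivial and of rank strictly below $k$, and applying the inductive hypothesis to $\M_2$ gives $\M_2 \cong \NN_1 \oplus \NN_2$ with $\NN_2$ non-trivial and indecomposable. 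Then $\lat \cong (\M_1 \oplus \NN_1) \oplus \NN_2$ is the desired decomposition, and additivity of rank guarantees the recursion terminates.

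For the second statement, assume every decomposition $\lat \cong \lat_1 \oplus \lat_2$ with $\lat_2$ non-trivial and indecomposable forces $\rank(\lat_2) = 1$, and again induct on $k$. The base case $k = 1$ is immediate, since a rank-one lattice is isomorphic to $\alpha_1 \Z$ with $\alpha_1 = \lambda_1(\lat)$. For $k \geq 2$, I apply the existence statement to write $\lat \cong \lat_1 \oplus \lat_2$ with $\lat_2$ non-trivial and indecomposable; the hypothesis yields $\rank(\lat_2) = 1$, so $\lat_2 \cong \alpha_k \Z$ and $\rank(\lat_1) = k-1 \geq 1$. The key point is that $\lat_1$ inherits the hypothesis: given any $\lat_1 \cong \M_1 \oplus \M_2$ with $\M_2$ non-trivial and indecomposable, regrouping gives $\lat \cong (\M_1 \oplus \lat_2) \oplus \M_2$, a decomposition of $\lat$ whose indecomposable factor is $\M_2$, so the hypothesis on $\lat$ forces $\rank(\M_2) = 1$. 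Hence $\lat_1$ satisfies the same hypothesis, and the inductive hypothesis gives $\lat_1 \cong \alpha_1 \Z \oplus \cdots \oplus \alpha_{k-1} \Z$; together with $\lat_2 \cong \alpha_k \Z$ this yields $\lat \cong \alpha_1 \Z \oplus \cdots \oplus \alpha_k \Z$.

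I expect the one genuinely delicate point to be the inheritance step in the second part, namely that a decomposition of the summand $\lat_1$ lifts to a decomposition of $\lat$ with the same indecomposable factor $\M_2$. This rests entirely on being able to freely reassociate and reorder orthogonal direct sums up to isomorphism (by composing orthogonal transformations on the orthogonal subspaces involved); once that is granted, the identification $\M_1 \oplus \M_2 \oplus \lat_2 \cong (\M_1 \oplus \lat_2) \oplus \M_2$ is immediate. It is worth noting that this route sidesteps the uniqueness of the decomposition into indecomposables (Eichler's theorem): I use only the existence of an indecomposable factor from the first part together with the freedom to regroup, so no Krull--Schmidt-type uniqueness is needed.
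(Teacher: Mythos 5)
Your proof is correct and takes essentially the same approach as the paper: the existence part is the paper's induction verbatim, and your second part differs only in bookkeeping---you peel off one rank-one indecomposable factor at a time and prove the hypothesis is inherited by the complement, whereas the paper first decomposes $\lat$ fully into indecomposables $\lat \cong \lat'_1 \oplus \cdots \oplus \lat'_\ell$ and then rules out a factor $\lat'_i$ of rank greater than one via the same regrouping move $\lat \cong \big(\bigoplus_{j \neq i} \lat'_j\big) \oplus \lat'_i$ that powers your inheritance step. In both arguments the entire content is the freedom to reassociate and permute orthogonal direct sums up to isomorphism, and, exactly as you note, neither needs any Krull--Schmidt-type uniqueness of the decomposition.
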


\begin{proof}
The proof is by induction on $k$. Let $\lat\subset\R^n$ be a non-trivial lattice with rank $k$. Observe that the claim is true for $k=1$. Suppose that the result holds for any rank $m$ lattice $\lat'$ for $m < k$. If $\lat$ is indecomposable, then $\lat\cong\{\vec0\}\oplus\lat$. Otherwise, we have $\lat\cong \lat'_1\oplus\lat'_2$, where $1 \leq \rank(\lat_2') < k$. By the induction hypothesis, $\lat'_2\cong\lat''\oplus\lat_2$ where $\lat_2$ is indecomposable. Let $\lat_1=\lat'_1\oplus \lat''$, and we obtain $\lat\cong\lat_1\oplus\lat_2$ where $\lat_2$ is indecomposable, as needed.

Next, we assume that all decompositions $\lat \cong \lat_1 \oplus \lat_2$ with $\lat_2$ indecomposable have $\rank(\lat_2)= 1$ and show that this implies that $\lat\cong \alpha_1\Z\oplus\cdots\oplus\alpha_k \Z$. To that end, we decompose $\lat$ into $\lat\cong\lat'_1\oplus\lat'_2$, and repeatedly decompose the sublattices until we have $\lat\cong\lat'_1\oplus\lat'_2\oplus\cdots\oplus\lat'_\ell$, where $\lat_i'$ is indecomposable (and non-trivial) for all $i$.  It remains to show that $\rank(\lat_i') = 1$ for all $i$, i.e., that $\lat\cong \alpha_1\Z\oplus\cdots\oplus\alpha_k \Z$. Suppose the rank of $\lat'_i$ is greater than one for some $i$. Then, setting $\lat_2=\lat'_i$ and $\lat_1=\bigoplus_{1\leq j\leq \ell, j\neq i}\lat'_j$, we see that $\lat\cong\lat_1\oplus\lat_2$. However, since $\rank(\lat_2)>1$ and $\lat_2$ is indecomposable, this is a contradiction. We conclude that $\rank(\lat_i') = 1$ for all $i$ and therefore $\lat\cong \alpha_1\Z\oplus\cdots\oplus\alpha_k \Z$.
\end{proof}

\begin{corollary}\label{cor:decomposeZn}
    If $\lat\subset\R^n$ is a stable full-rank lattice such that all decompositions $\lat\cong\lat_1\oplus\lat_2$ with $\lat_2$ non-trivial and indecomposable have $\rank(\lat_2)=1$, then $\lat\cong \Z^n$.
\end{corollary}

\begin{proof}
    By Claim~\ref{claim:decompose}, $\lat\cong \alpha_1\Z\oplus\cdots\oplus\alpha_n \Z$. Observe that by the stability of $\lat$, we have $\alpha_i\geq 1$ for all $i$. Lastly, since $1 =\det(\lat) =  \det(\alpha_1 \Z)\cdots\det(\alpha_n \Z)$, we conclude that $\alpha_i=1$ for all $i$ and it follows that $\lat\cong\Z^n$.
\end{proof}

\subsection{The Fourier transform of the Epstein zeta function}

\label{sec:Bessel}

The \emph{modified Bessel function of the second kind} is defined as
\[
    K_{\alpha}(x) := \int_0^\infty e^{-x \cosh(t)} \cosh(\alpha t) {\rm d} t
\]
for $\alpha \in \R$ and $x > 0$.
It will be convenient for us to instead work with the function 
\[
    \Kbar_\alpha(x) := 2^{1-\alpha}x^\alpha K_\alpha(x)
    \; ,
\]
for $x >0$. We also define $\Kbar_\alpha(0) := \Gamma(\alpha)$ for $\alpha > 0$. (This is the limit as $x \to 0^+$ of $\Kbar_\alpha(x)$.)

\begin{claim}
For any $s > n/2$ and $q > 0$, let $f : \R^n \to \R$ be  $f(\vec{y}) := (\|\vec{y}\|^2 + q)^{-s}$. Then, the Fourier transform of $f$ is

\[
    \widehat{f}(\vec{w}) = \frac{\pi^{n/2} q^{n/2-s}}{\Gamma(s)} \cdot \Kbar_{s-n/2}(2\pi \sqrt{q} \|\vec{w}\|)
    \; .
\]
In particular, for any lattice $\lat \subset \R^n$, we have
\begin{equation}
    \label{eq:zeta_PSF}
     \qzf{\lat}{s}{q} =   \frac{\pi^{n/2}q^{n/2-s}}{\Gamma(s)\det(\lat)} \cdot \sum_{\vec{w} \in \lat^*} \overline{K}_{s-n/2}(2\pi \sqrt{q} \|\vec{w}\|)
    \; .
\end{equation}
\end{claim}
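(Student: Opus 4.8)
The plan is to compute $\widehat f$ by the standard Gamma--Gaussian--Bessel chain and then invoke Theorem~\ref{thm:PSF}; throughout I use the Fourier convention under which Theorem~\ref{thm:PSF} holds, namely $\widehat g(\vec w)=\int_{\R^n}g(\vec x)e^{-2\pi i\inner{\vec x,\vec w}}\intd\vec x$. \textbf{Step 1 (Gaussian superposition).} Using the identity $a^{-s}=\frac{1}{\Gamma(s)}\int_0^\infty t^{s-1}e^{-at}\intd t$ with $a=\|\vec y\|^2+q$, I would write
\[
f(\vec y)=\frac{1}{\Gamma(s)}\int_0^\infty t^{s-1}e^{-qt}e^{-t\|\vec y\|^2}\intd t.
\]
Since $2s>n$ we have $f\in L^1(\R^n)$, so Fubini justifies interchanging the $t$-integral with the Fourier integral. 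Using that the Fourier transform of $\vec x\mapsto e^{-t\|\vec x\|^2}$ is $(\pi/t)^{n/2}e^{-\pi^2\|\vec w\|^2/t}$, this gives
\[
\widehat f(\vec w)=\frac{\pi^{n/2}}{\Gamma(s)}\int_0^\infty t^{s-n/2-1}e^{-qt-\pi^2\|\vec w\|^2/t}\intd t.
\]

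\textbf{Step 2 (Bessel identification).} For $\vec w\neq\vec0$, I would apply the substitution $t=(\pi\|\vec w\|/\sqrt q)\,e^u$ to the last integral. A short computation turns the exponent into $-2\pi\sqrt q\,\|\vec w\|\cosh u$, and after discarding the odd $\sinh$ contribution the integral matches the defining representation $K_\alpha(x)=\int_0^\infty e^{-x\cosh t}\cosh(\alpha t)\intd t$ with $\alpha=s-n/2$ and $x=2\pi\sqrt q\,\|\vec w\|$; concretely,
\[
\int_0^\infty t^{s-n/2-1}e^{-qt-\pi^2\|\vec w\|^2/t}\intd t=2\Big(\tfrac{\pi\|\vec w\|}{\sqrt q}\Big)^{s-n/2}K_{s-n/2}\big(2\pi\sqrt q\,\|\vec w\|\big).
\]
Substituting the definition $\Kbar_\alpha(x)=2^{1-\alpha}x^\alpha K_\alpha(x)$ then collapses all the powers of $2$, $\pi$, $\|\vec w\|$, and $q$ into exactly the claimed prefactor, giving $\widehat f(\vec w)=\frac{\pi^{n/2}q^{n/2-s}}{\Gamma(s)}\Kbar_{s-n/2}(2\pi\sqrt q\,\|\vec w\|)$. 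The case $\vec w=\vec0$ follows by continuity together with the convention $\Kbar_\alpha(0)=\Gamma(\alpha)$, and can be cross-checked directly via the Beta integral $\int_{\R^n}(\|\vec x\|^2+q)^{-s}\intd\vec x=\pi^{n/2}q^{n/2-s}\Gamma(s-n/2)/\Gamma(s)$.

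\textbf{Step 3 (Poisson summation).} Finally I would apply Theorem~\ref{thm:PSF} to $f$, verifying its three hypotheses: $f$ is continuous since $q>0$; the decay bound $|f(\vec x)|=O((1+\|\vec x\|)^{-n-\delta})$ holds with any $\delta\in(0,2s-n]$ because $f(\vec x)\sim\|\vec x\|^{-2s}$; and the dual sum converges absolutely because $K_\alpha(x)=O(e^{-x})$ as $x\to\infty$, so $\Kbar_{s-n/2}(2\pi\sqrt q\,\|\vec w\|)$ decays exponentially in $\|\vec w\|$. The conclusion $\qzf{\lat}{s}{q}=\det(\lat)^{-1}\sum_{\vec w\in\lat^*}\widehat f(\vec w)$ together with the formula from Step~2 is precisely Eq.~\eqref{eq:zeta_PSF}.

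I expect the only real subtleties to be the constant bookkeeping in Step~2 and the Fubini justification in Step~1; the rest is a routine application of well-known integral representations. Notably, the exponential decay of $K_\alpha$ driving the convergence check in Step~3 holds for every $q>0$ and does not use the restriction $q\le(2s-n)/(n+2)$ imposed elsewhere.
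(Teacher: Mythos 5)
Your proposal is correct and follows essentially the same route as the paper: both express $f$ as a Gamma-weighted superposition of Gaussians, interchange integrals via Fubini, reduce the resulting one-dimensional integral to the $\int_0^\infty e^{-x\cosh t}\cosh(\alpha t)\intd t$ representation of $K_{s-n/2}$ by an exponential substitution (your single substitution over the whole line with the odd $\sinh$ part discarded is just a folded version of the paper's split at $\tau=1$), and then apply Theorem~\ref{thm:PSF}. Your explicit verification of the three hypotheses of the Poisson Summation Formula and the direct check of the $\vec{w}=\vec0$ case are slightly more careful than the paper's presentation, but the argument is the same.
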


\begin{proof}
To see this, first notice that
\begin{equation}
\label{eq:zeta_from_theta_pointwise}
    f(\vec{y}) = \frac{\pi^s}{\Gamma(s)} \cdot \int_0^\infty \tau^{s-1}\exp(-\pi \tau q) \cdot g_\tau(\vec{y}) \intd \tau
    \; ,
\end{equation}
where $g_\tau(\vec{y}) := \exp(-\pi \tau \|\vec{y}\|^2)$,
and the equality follows by the change of variable $t = \pi q \tau + \pi \|\vec{y}\|^2\tau$ and the definition of the Gamma function. 
Recall that $\widehat{g_\tau}(\vec{w}) = \tau^{-n/2} \cdot g_{1/\tau}(\vec{w})$. By Fubini's theorem, the Fourier transform of the integral is the integral of the Fourier transform, so  we have
\[
     \widehat{f}(\vec{w}) = \frac{\pi^s}{\Gamma(s)} \cdot \int_0^\infty \tau^{s-n/2-1} \exp(-\pi \tau q) g_{1/\tau}(\vec{w}) \intd \tau
    \; .
\] 
In particular, when $\vec{w}=\vec0$, we see that $\widehat{f}(\vec0) = \pi^{n/2} q^{-(s-n/2)}\Gamma(s-n/2)/\Gamma(s)$, as needed.

Otherwise, for $\vec{w}\neq\vec0$, by a change of variable, we have
\begin{align*}
    \frac{\Gamma(s)}{\pi^s} \cdot \widehat{f}(\vec{w})
    &= (\|\vec{w}\|^2/q)^{s/2-n/4} \cdot  \int_0^\infty \tau^{s-n/2-1} \exp(-\pi \sqrt{q} \|\vec{w}\| \cdot  (\tau + 1/\tau) ) \intd \tau\\
    &= (\|\vec{w}\|^2/q)^{s/2-n/4} \cdot \int_1^\infty (\tau^{s-n/2-1} + \tau^{n/2-s-1}) \exp(-\pi \sqrt{q} \|\vec{w}\| \cdot (\tau + 1/\tau)) \intd \tau
    \; ,
\end{align*}
where in the last equality we separate the integration over the interval $[0,1]$ from $[1,\infty)$ and again use a change of variable. 
Applying the final change of variable $\tau =  \exp(t)$ and recalling the definition of $\overline{K}$ completes the calculation.
\end{proof}

The following lemma follows from the fact that the Fourier transform of $f$ is non-negative. See, e.g.,~\cite[Lemma 2.3]{regevReverseMinkowskiTheorem2017} for a proof of an analogous result for $\Theta(\lat,i\tau)$. The proof of Lemma~\ref{lemma:zeta_directSum} is essentially identical. (Alternatively, one can derive Lemma~\ref{lemma:zeta_directSum} directly from the analogous result for $\Theta(\lat,i\tau)$ by using Eq.~\eqref{eq:zeta_from_theta}.)

\begin{lemma}\label{lemma:zeta_directSum}
    For any lattice $\lat\subset\R^n$ with primitive sublattice $\lat' \subseteq \lat$, $s > n/2$, and $q\geq 0$, we have
    \[
        \qzfp{\lat}{s}{q} \leq \qzfp{(\lat/\lat') \oplus \lat'}{s}{q}
        \; ,
    \] 
    with equality if and only if $\lat$ is isomorphic to $\lat/\lat' \oplus \lat'$. 
\end{lemma}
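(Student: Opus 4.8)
The plan is to prove the equivalent inequality $\qzf{\lat}{s}{q} \le \qzf{(\lat/\lat') \oplus \lat'}{s}{q}$, since for $q > 0$ both lattices contain $\vec0$ and hence each side exceeds its $\zeta'$-counterpart by the same additive constant $q^{-s}$; the case $q = 0$ is handled directly below. Write $V := \spn(\lat')$ and $W := V^\perp$, and let $k := \rank(\lat')$. Since $\lat'$ is primitive, $\lat \cap V = \lat'$, and every coset of $\lat'$ in $\lat$ is indexed by its image $\vec{z} \in \lat/\lat' = \pi_W(\lat) \subset W$: choosing for each $\vec{z}$ a representative whose $V$-component is $\vec{t}(\vec{z}) \in V$, the coset is $\{\vec{z} + \vec{t}(\vec{z}) + \vec{v} : \vec{v} \in \lat'\}$. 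Grouping $\qzf{\lat}{s}{q} = \sum_{\vec{y} \in \lat} (\|\vec{y}\|^2 + q)^{-s}$ by these cosets and using $\|\vec{z} + \vec{t}(\vec{z}) + \vec{v}\|^2 = \|\vec{z}\|^2 + \|\vec{t}(\vec{z}) + \vec{v}\|^2$, the inner sum over each coset becomes $\sum_{\vec{v} \in \lat'} h_{\vec{z}}(\vec{t}(\vec{z}) + \vec{v})$, where $h_{\vec{z}}(\vec{u}) := (\|\vec{u}\|^2 + q_{\vec{z}})^{-s}$ on $V \cong \R^k$ and $q_{\vec{z}} := \|\vec{z}\|^2 + q$. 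The key point is that each coset sum is maximized when the shift $\vec{t}(\vec{z})$ is trivial, and that the trivial-shift sums reassemble into exactly $\qzf{(\lat/\lat') \oplus \lat'}{s}{q}$.

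To make this precise I would apply the Poisson Summation Formula (Theorem~\ref{thm:PSF}) to $h_{\vec{z}}$ on the lattice $\lat'$ in $V$, which is applicable whenever $q_{\vec{z}} > 0$ since then $h_{\vec{z}}$ is continuous with the required decay (using $s > n/2 \ge k/2$) and the right-hand side converges absolutely. Taking real parts (the left-hand side is real), this gives
\[
    \sum_{\vec{v} \in \lat'} h_{\vec{z}}(\vec{t}(\vec{z}) + \vec{v}) = \frac{1}{\det(\lat')} \sum_{\vec{w} \in (\lat')^*} \widehat{h_{\vec{z}}}(\vec{w}) \cos(2\pi \inner{\vec{w}, \vec{t}(\vec{z})})
    \; .
\]
By the Fourier-transform computation established above, $\widehat{h_{\vec{z}}}(\vec{w}) = \pi^{k/2} q_{\vec{z}}^{k/2 - s} \Kbar_{s - k/2}(2\pi \sqrt{q_{\vec{z}}} \|\vec{w}\|) / \Gamma(s) > 0$ for every $\vec{w}$, since $\Kbar_\alpha(x) > 0$ for all $x \geq 0$ when $\alpha > 0$. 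As $\cos(\cdot) \leq 1$, the right-hand side is bounded above by its value at $\vec{t}(\vec{z}) = \vec0$, namely $\sum_{\vec{v} \in \lat'} h_{\vec{z}}(\vec{v})$ (Poisson again, now unshifted). Summing $\sum_{\vec{v}} h_{\vec{z}}(\vec{t}(\vec{z}) + \vec{v}) \leq \sum_{\vec{v}} h_{\vec{z}}(\vec{v})$ over $\vec{z} \in \lat/\lat'$ yields $\qzf{\lat}{s}{q} \leq \sum_{\vec{z} \in \lat/\lat'} \sum_{\vec{v} \in \lat'} (\|\vec{z}\|^2 + \|\vec{v}\|^2 + q)^{-s} = \qzf{(\lat/\lat') \oplus \lat'}{s}{q}$, the claim.

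For the case $q = 0$ I would run the same argument on $\qzfp{\lat}{s}{0}$, separating off the coset $\vec{z} = \vec0$: by primitivity $\lat \cap V = \lat'$, so this coset contributes $\qzfp{\lat'}{s}{0}$ to both sides (its shift is trivial, $\vec{t}(\vec0) = \vec0$), and thus no Poisson summation is needed there and the global singularity at $\vec0$ is avoided; for every other coset $q_{\vec{z}} = \|\vec{z}\|^2 > 0$, so the Poisson step applies verbatim. For the equality characterization, the strict positivity $\widehat{h_{\vec{z}}}(\vec{w}) > 0$ for \emph{all} $\vec{w} \in (\lat')^*$ forces, in the case of equality, $\cos(2\pi \inner{\vec{w}, \vec{t}(\vec{z})}) = 1$, i.e. $\inner{\vec{w}, \vec{t}(\vec{z})} \in \Z$, for every $\vec{w} \in (\lat')^*$ and every $\vec{z}$; this says $\vec{t}(\vec{z}) \in (\lat')^{**} = \lat'$, which is precisely the statement that $\lat = (\lat/\lat') \oplus \lat'$ is an honest orthogonal direct sum. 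The converse, that $\lat \cong (\lat/\lat') \oplus \lat'$ forces equality, is immediate since $\zeta'$ is invariant under orthogonal transformations. I expect the only real care to lie in the bookkeeping of the coset decomposition and in verifying the hypotheses of Theorem~\ref{thm:PSF} (continuity, decay, absolute convergence) for each $h_{\vec{z}}$; the analytic heart — positivity of the Fourier transform — is already in hand from the Bessel-function computation, so no genuine obstacle remains.
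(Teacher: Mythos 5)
Your proof is correct and takes essentially the same route the paper intends: the paper's proof of Lemma~\ref{lemma:zeta_directSum} is the argument of \cite[Lemma 2.3]{regevReverseMinkowskiTheorem2017} adapted from the theta function to $f(\vec{y}) = (\|\vec{y}\|^2+q)^{-s}$, i.e., exactly your coset decomposition plus shifted Poisson summation, with the strict positivity of $\widehat{f}$ (via the $\Kbar$ computation in Section~\ref{sec:Bessel}) driving both the inequality and the equality characterization $\vec{t}(\vec{z}) \in ((\lat')^*)^* = \lat'$. Your care with the $q=0$ case (isolating the $\vec{z}=\vec{0}$ coset, where primitivity gives a trivial shift) and with the hypotheses of Theorem~\ref{thm:PSF} is exactly the bookkeeping the paper leaves implicit.
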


 From this, we immediately derive the following corollary by noticing that if $\lat = \lat_1 \oplus \lat_2$ and $\lat' \subseteq \lat_2$, then $\lat/\lat' = \lat_1 \oplus (\lat_2/\lat')$.

\begin{corollary}\label{cor:zeta_directSum}
    For any lattice $\lat$ that can be written as a direct sum $\lat = \lat_1 \oplus \lat_2 \subset \R^n$, primitive sublattice $\lat' \subseteq \lat_2$, $s > n/2$, and $q\geq 0$, we have
    \[
        \qzfp{\lat}{s}{q} \leq \qzfp{\lat_1 \oplus (\lat_2/\lat') \oplus \lat'}{s}{q}
        \; ,
    \] 
    with equality if and only if $\lat_2$ is isomorphic to $\lat_2/\lat' \oplus \lat'$.
\end{corollary}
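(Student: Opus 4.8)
The plan is to deduce Corollary~\ref{cor:zeta_directSum} directly from Lemma~\ref{lemma:zeta_directSum}, so the only real work is bookkeeping about how the quotient interacts with a direct-sum decomposition. First I would record the structural identity flagged in the text: if $\lat = \lat_1 \oplus \lat_2$ and $\lat' \subseteq \lat_2$ is primitive \emph{in $\lat$}, then $\lat/\lat' = \lat_1 \oplus (\lat_2/\lat')$. To see this, recall that by definition $\lat/\lat' = \pi_{\spn(\lat')^\perp}(\lat)$. Since $\lat' \subseteq \lat_2$ and the summands $\lat_1, \lat_2$ live in orthogonal spans, the vectors of $\lat_1$ are already orthogonal to $\spn(\lat')$ and are fixed by $\pi_{\spn(\lat')^\perp}$, while $\lat_2$ projects to $\lat_2/\lat'$; since $\spn(\lat_1) \perp \spn(\lat_2) \supseteq \spn(\lat')$, the projection respects the orthogonal decomposition and we get $\lat/\lat' = \lat_1 \oplus (\lat_2/\lat')$ as an internal orthogonal direct sum.

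The one point that needs care here is the primitivity hypothesis: Lemma~\ref{lemma:zeta_directSum} requires $\lat'$ to be primitive \emph{in $\lat$}, whereas the corollary only posits $\lat' \subseteq \lat_2$. So I would first check that $\lat'$ primitive in $\lat_2$ implies $\lat'$ primitive in $\lat$. Indeed, $\spn(\lat') \subseteq \spn(\lat_2)$, so $\lat \cap \spn(\lat') = (\lat_1 \oplus \lat_2) \cap \spn(\lat') = \lat_2 \cap \spn(\lat')$ because any element of $\lat$ lying in $\spn(\lat') \subseteq \spn(\lat_2)$ must have zero $\lat_1$-component; this equals $\lat'$ precisely when $\lat'$ is primitive in $\lat_2$. (If the corollary intends $\lat'$ to be an arbitrary primitive sublattice of the ambient $\lat$ that happens to sit inside $\lat_2$, this step is immediate, but stating the implication makes the hypothesis self-contained.)

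With these two facts in hand, the derivation is a direct substitution. Applying Lemma~\ref{lemma:zeta_directSum} to $\lat$ with primitive sublattice $\lat'$ gives
\[
    \qzfp{\lat}{s}{q} \leq \qzfp{(\lat/\lat') \oplus \lat'}{s}{q}
    \; ,
\]
and then replacing $\lat/\lat'$ by $\lat_1 \oplus (\lat_2/\lat')$ yields exactly
\[
    \qzfp{\lat}{s}{q} \leq \qzfp{\lat_1 \oplus (\lat_2/\lat') \oplus \lat'}{s}{q}
    \; .
\]
For the equality characterization, Lemma~\ref{lemma:zeta_directSum} tells us equality holds iff $\lat \cong \lat/\lat' \oplus \lat'$, i.e.\ iff $\lat_1 \oplus \lat_2 \cong \lat_1 \oplus (\lat_2/\lat') \oplus \lat'$. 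Since the $\lat_1$ summand is common to both sides and splits off orthogonally, this reduces to $\lat_2 \cong (\lat_2/\lat') \oplus \lat'$, which is the stated condition. I would make this reduction precise by noting that $\qzfp{\cdot}{s}{q}$ of a direct sum with $\lat_1$ matches on both sides exactly when the non-$\lat_1$ parts are isomorphic, using the equality clause of the lemma rather than trying to argue about isomorphism of direct sums abstractly.

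The main obstacle is entirely the primitivity and orthogonality bookkeeping: there is no new analytic content, since all the inequality and the equality condition are inherited wholesale from Lemma~\ref{lemma:zeta_directSum}. The delicate step is verifying $\lat/\lat' = \lat_1 \oplus (\lat_2/\lat')$ as \emph{lattices} (not just as sets after projection), which hinges on the orthogonality of $\spn(\lat_1)$ and $\spn(\lat_2)$ ensuring the projection $\pi_{\spn(\lat')^\perp}$ acts as the identity on $\lat_1$ and as $\pi_{(\spn(\lat') \cap \spn(\lat_2))^\perp \cap \spn(\lat_2)}$ on $\lat_2$; once that is established, everything else is a one-line substitution.
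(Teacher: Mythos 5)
Your derivation of the inequality is correct and is exactly the paper's route: the entire proof in the paper is the one-line observation that $\lat/\lat' = \lat_1 \oplus (\lat_2/\lat')$ fed into Lemma~\ref{lemma:zeta_directSum}, and your verification of that quotient identity, together with the check that $\lat'$ primitive in $\lat_2$ is primitive in $\lat$ (since $\spn(\lat') \subseteq \spn(\lat_2) \perp \spn(\lat_1)$), fills in bookkeeping the paper leaves implicit.

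There is, however, a genuine gap in your ``only if'' direction of the equality case. The lemma gives you: equality iff $\lat_1 \oplus \lat_2 \cong \lat_1 \oplus (\lat_2/\lat') \oplus \lat'$, and you then cancel the common summand $\lat_1$. Cancellation of a common orthogonal summand of Euclidean lattices is true, but it is a nontrivial theorem (uniqueness of orthogonal decomposition into indecomposables, due to Eichler and Kneser), not something that ``splits off'' for free. Worse, the device you propose for making it precise --- that $\qzfp{\lat_1 \oplus M}{s}{q} = \qzfp{\lat_1 \oplus M'}{s}{q}$ exactly when $M \cong M'$ --- is false as a general principle: isospectral non-isomorphic lattices exist (e.g., $E_8 \oplus E_8$ and $D_{16}^+$ share a theta function and hence all of these zeta values), so equality of $\zeta'$ values alone never implies isomorphism; it is only the equality clause of Lemma~\ref{lemma:zeta_directSum}, for the specific pair $(\lat_2,\ (\lat_2/\lat')\oplus\lat')$, that does. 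A clean fix avoids cancellation entirely: write
\[
    \qzfp{\lat_1 \oplus M}{s}{q} = \qzfp{\lat_1}{s}{q} + \sum_{\vec{x} \in \lat_1} \qzfp{M}{s}{q + \|\vec{x}\|^2}
    \; ,
\]
and apply Lemma~\ref{lemma:zeta_directSum} to the lattice $\lat_2$ with primitive sublattice $\lat'$ and shifted parameter $q + \|\vec{x}\|^2 \geq 0$ for each $\vec{x}$ separately (legitimate, since the lemma holds for all $q \geq 0$ and $s > n/2 \geq \rank(\lat_2)/2$). Each term satisfies $\qzfp{\lat_2}{s}{q+\|\vec{x}\|^2} \leq \qzfp{(\lat_2/\lat')\oplus\lat'}{s}{q+\|\vec{x}\|^2}$, so equality of the totals forces equality term by term; taking the $\vec{x} = \vec0$ term and invoking the lemma's equality clause yields $\lat_2 \cong (\lat_2/\lat') \oplus \lat'$, as required (and this argument reproves the inequality along the way). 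Alternatively, you could cite Eichler--Kneser cancellation explicitly, but the fiber-wise argument stays within the tools the paper already provides.
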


\subsection{The Laplacian}

For a twice differentiable function $f : \R^{n \times n} \to \R$, we write
\[
    \frac{\partial^2}{\partial A^2} f
    :=
    \lim_{\eps \to 0} \frac{f(\eps A) + f(-\eps A) - 2 f(0)}{\eps^2}
\]
for the second derivative of $f$ at $0$ in the $A$ direction.

The \emph{Laplacian} (at $0$) of a twice differentiable function $f$ over a  $d$-dimensional subspace $S$ of $\R^{n \times n}$ is then
\[
    \Delta_S f := \sum_{i=1}^d \frac{\partial^2}{\partial E_i^2} f
    \; ,
\]
where $E_1,\ldots, E_d$ is any orthonormal basis of $S$.
As this definition suggests, this quantity does not depend on the choice of orthonormal basis. 
We will use the simple fact that, if the Laplacian is positive, then any open subset of $S$ containing $0$ contains an $X$ such that $f(X) > f(0)$.

We write $S^{d} \subset \R^{d \times d}$ for the space of all symmetric matrices and $S_0^d \subset \R^{d \times d}$ for the space of symmetric trace-zero matrices. Notice that 
\begin{equation}
    \label{eq:laplace_minus_trace}
    \Delta_{S_0^d} f = \Delta_{S^d} f - \frac{1}{d} \cdot \frac{\partial^2}{\partial I_{d}^2}  f
    \; ,
\end{equation}
where $I_d$ is the identity matrix.

\begin{lemma}
\label{lem:laplace_computation}
Let
\[
    \mathcal{E}(A) := 
    \mathcal{E}_{\lat_1, \lat_2,s;q}(A) :=
    \qzfp{\lat_1 \oplus e^{A/2} \lat_2}{s}{q} = 
    \sum_{\substack{\vec{x} \in \lat_1,\vec{y} \in \lat_2\\(\vec{x},\vec{y})\neq \vec0}} (\|\vec{x}\|^2 + \vec{y}^T e^A \vec{y}+q)^{-s}
    \; 
\]
for lattices $\lat_1 \subset \R^{n-d}$, $\lat_2 \subset \R^d$, $s > n/2$, $q\geq 0$, and $A \in S^d$.
Then, 
\[
    \Delta_{S_0^d} \mathcal{E}
        = 
        s \cdot \frac{d-1}{d} \cdot \sum_{\substack{\vec{y} \in \lat_2\\\vec{y}\neq 0}} \big((s+1) \|\vec{y}\|^4 \cdot \qzf{\lat_1}{ s+2}{\|\vec{y}\|^2 + q}  - (d/2+1) \|\vec{y}\|^2 \cdot \qzf{\lat_1}{ s+1}{\|\vec{y}\|^2 + q}\big)
        \; .
        \]
\end{lemma}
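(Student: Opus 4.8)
The plan is to reduce the computation to a single summand and then reassemble. Fix lattice vectors $\vec{x} \in \lat_1$ and $\vec{y} \in \lat_2$ with $(\vec{x},\vec{y}) \neq \vec0$, set $c := \|\vec{x}\|^2 + q$, and consider the single term $g(A) := (c + \vec{y}^T e^A \vec{y})^{-s}$. Terms with $\vec{y} = \vec0$ are constant in $A$, so they contribute nothing to any second derivative and may be dropped. For a fixed direction $E \in S^d$ I would compute $\frac{\partial^2}{\partial E^2} g = \phi''(0)$ where $\phi(\eps) := g(\eps E)$. Writing $h(\eps) := \vec{y}^T e^{\eps E} \vec{y}$, one has $h(0) = \|\vec{y}\|^2$, $h'(0) = \vec{y}^T E \vec{y}$, and $h''(0) = \vec{y}^T E^2 \vec{y}$, so with $u := \|\vec{x}\|^2 + \|\vec{y}\|^2 + q$ the chain rule gives
\[
    \frac{\partial^2}{\partial E^2} g = s(s+1)\, u^{-s-2} (\vec{y}^T E \vec{y})^2 - s\, u^{-s-1} (\vec{y}^T E^2 \vec{y})
    \; .
\]

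The heart of the argument is the evaluation of $\Delta_{S^d} g = \sum_i \frac{\partial^2}{\partial E_i^2} g$ over an orthonormal basis of $S^d$, which reduces to two combinatorial trace identities. Using the explicit orthonormal basis consisting of the diagonal matrices $e_{ii}$ (the matrix with a $1$ in entry $(i,i)$ and zeros elsewhere) and the off-diagonal matrices $\tfrac{1}{\sqrt2}(e_{ij} + e_{ji})$ for $i < j$, I would show
\[
    \sum_E (\vec{y}^T E \vec{y})^2 = \|\vec{y}\|^4
    \qquad\text{and}\qquad
    \sum_E \vec{y}^T E^2 \vec{y} = \tfrac{d+1}{2} \|\vec{y}\|^2
    \; ,
\]
the first by recognizing $\sum_i y_i^4 + 2\sum_{i<j} y_i^2 y_j^2 = \big(\sum_i y_i^2\big)^2$, and the second from $\big(\tfrac{1}{\sqrt2}(e_{ij}+e_{ji})\big)^2 = \tfrac12(e_{ii}+e_{jj})$ together with the count that each index lies in $d-1$ pairs. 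This yields $\Delta_{S^d} g = s(s+1) u^{-s-2} \|\vec{y}\|^4 - s\, u^{-s-1} \tfrac{d+1}{2} \|\vec{y}\|^2$.

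Next I would pass to the trace-zero subspace using Eq.~\eqref{eq:laplace_minus_trace}. Taking $E = I_d$ in the single-term formula gives $\frac{\partial^2}{\partial I_d^2} g = s(s+1) u^{-s-2}\|\vec{y}\|^4 - s\, u^{-s-1}\|\vec{y}\|^2$, and subtracting $\tfrac1d$ of this from $\Delta_{S^d} g$ gives, after collecting terms,
\[
    \Delta_{S_0^d} g = s \cdot \tfrac{d-1}{d} \Big( (s+1)\|\vec{y}\|^4\, u^{-s-2} - (d/2+1)\|\vec{y}\|^2\, u^{-s-1} \Big)
    \; ,
\]
where the coefficient of the second term relies on the identity $\tfrac{d+1}{2} - \tfrac1d = \tfrac{(d-1)(d+2)}{2d} = \tfrac{d-1}{d}(d/2+1)$ (equivalently $d^2 + d - 2 = (d+2)(d-1)$). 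Finally I would sum over $\vec{x} \in \lat_1$ and $\vec{y} \in \lat_2 \setminus \{\vec0\}$, using $\sum_{\vec{x} \in \lat_1} u^{-s-k} = \qzf{\lat_1}{s+k}{\|\vec{y}\|^2 + q}$ for $k = 1,2$ to recover the claimed closed form.

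The one genuine, though routine, technical point is justifying the interchange of the Laplacian (a limit of finite differences, hence of second derivatives) with the infinite sum over $\vec{x}$ and $\vec{y}$: since $s > n/2$, the series $\qzf{\lat_1}{s+2}{\cdot}$ and $\qzf{\lat_1}{s+1}{\cdot}$ converge absolutely and uniformly for $A$ in a neighborhood of $0$, so termwise differentiation is valid. This, rather than any single computation, is where care is needed; the remaining steps are a bookkeeping exercise in the matrix-unit basis and the elementary algebraic identity above.
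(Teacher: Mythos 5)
Your proposal is correct and follows essentially the same route as the paper: the per-summand second derivative you obtain via the chain rule agrees with the Taylor-expansion computation in the paper's appendix, and your evaluation of $\Delta_{S^d}$ over the basis $E_{i,i}$, $(E_{i,j}+E_{j,i})/\sqrt{2}$ and the passage to $S_0^d$ via Eq.~\eqref{eq:laplace_minus_trace} (using $\tfrac{d+1}{2}-\tfrac{1}{d}=\tfrac{d-1}{d}(d/2+1)$) are exactly the paper's steps. Your explicit justification of termwise differentiation by uniform convergence for $s>n/2$ is a point the paper treats only implicitly, and is a welcome addition.
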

\begin{proof}
The second derivative, derived in Appendix~\ref{app:second_derivative}, is 
\[
    \frac{\partial^2 }{\partial A^2} \mathcal{E}=s \sum_{\substack{\vec{x} \in \lat_1, \vec{y} \in \lat_2\\(\vec{x},\vec{y})\neq \vec0}} (\|\vec{x}\|^2 + \|\vec{y}\|^2+q)^{-s-1}\bigg((s+1)\frac{(\vec{y}^T A\vec{y})^2}{\|\vec{x}\|^2 + \|\vec{y}\|^2+q}-\vec{y}^T A^2 \vec{y}\bigg)\; .
\]
     Using the orthonormal basis of $S^d$ given by $E_{i,i}$ for $ i = 1,\ldots, d$ and $(E_{i,j} + E_{j,i})/\sqrt{2}$ for $1 \leq i < j \leq d$, where $E_{i,j} \in \R^{d \times d}$ is the matrix that has a $1$ in the $(i,j)$th entry and zeros elsewhere, we can compute
        \[
    \Delta_{S^d} \mathcal{E} = s\sum_{\substack{\vec{x}\in\lat_1,\vec{y}\in\lat_2\\(\vec{x},\vec{y})\neq\vec0}}(\|\vec{x}\|^2+\|\vec{y}\|^2+q)^{-s-1}
    \Big(
       (s+1)\frac{\|\vec{y}\|^4}{\|\vec{x}\|^2+\|\vec{y}\|^2+q}-\frac{d+1}{2} \cdot \|\vec{y}\|^2
    \Big)
        \]
    One can then apply Eq.~\eqref{eq:laplace_minus_trace} to calculate the Laplacian over the space $S_0^d$ of symmetric trace-zero matrices,
           \begin{align*}
        \Delta_{S_0^d} \mathcal{E}
         = & s\cdot\frac{d-1}{d}\sum_{\substack{\vec{x} \in \lat_1, \vec{y} \in \lat_2\\(\vec{x},\vec{y})\neq \vec0}}(\|\vec{x}\|^2+\|\vec{y}\|^2+q)^{-s-1}\bigg((s+1)\frac{\|\vec{y}\|^4}{\|\vec{x}\|^2+\|\vec{y}\|^2+q}-\frac{d+2}{2}\cdot\|\vec{y}\|^2\bigg)
         \; ,
\end{align*}
as needed.
\end{proof}

\section{An inequality for the Laplacian of the Epstein zeta function}

Recall that $\Kbar_\alpha(x) := 2^{1-\alpha}x^\alpha K_\alpha(x)$ for $x > 0$ and $\alpha \in \R$, and $\Kbar_\alpha(0) := \Gamma(\alpha)$ for $\alpha>0$.

\begin{lemma}
\label{lem:zeta_recurrence_inequality}
    For any $\alpha > 1$ and $x \geq 0$,
    \[
        \overline{K}_\alpha(x) \geq (\alpha-1) \overline{K}_{\alpha-1}(x) 
        \; .
    \]
\end{lemma}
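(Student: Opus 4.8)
The plan is to prove the stronger \emph{exact} identity
\[
\overline{K}_\alpha(x) = (\alpha-1)\,\overline{K}_{\alpha-1}(x) + \frac{x^2}{4}\,\overline{K}_{\alpha-2}(x)
\]
for all $\alpha \in \R$ and $x > 0$, and then read off the lemma by discarding the manifestly nonnegative final term. At $x = 0$ both sides of the lemma equal $\Gamma(\alpha) = (\alpha-1)\Gamma(\alpha-1)$, so that case is an equality that I would check directly from the definition $\overline{K}_\alpha(0) = \Gamma(\alpha)$. This identity is the classical three-term recurrence for modified Bessel functions written in the $\overline{K}$ normalization, but I would derive it from scratch to keep the argument self-contained.

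First I would record the Gamma-type integral representation
\[
\overline{K}_\alpha(x) = \int_0^\infty u^{\alpha-1}\exp\!\big({-}u - x^2/(4u)\big)\intd u
\; ,
\]
valid for every $x > 0$ and $\alpha \in \R$, and reducing to $\int_0^\infty u^{\alpha-1}e^{-u}\intd u = \Gamma(\alpha)$ at $x = 0$ when $\alpha > 0$. This follows from the substitution $u = (x/2)\tau$ together with the identity $\int_0^\infty \tau^{\alpha-1}\exp(-\tfrac{x}{2}(\tau+1/\tau))\intd\tau = 2K_\alpha(x)$ already used implicitly in the computation of $\widehat{f}$ above, and hence is consistent with the definition $\overline{K}_\alpha(x) = 2^{1-\alpha}x^\alpha K_\alpha(x)$.

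Next I would obtain the recurrence by integrating a total derivative. Writing $w(u) := \exp(-u - x^2/(4u))$, which satisfies $w'(u) = (-1 + x^2/(4u^2))\,w(u)$, one computes
\[
\frac{\intd}{\intd u}\big[u^{\alpha-1}w(u)\big] = (\alpha-1)u^{\alpha-2}w(u) - u^{\alpha-1}w(u) + \frac{x^2}{4}u^{\alpha-3}w(u)
\; .
\]
For $x > 0$ the boundary contributions at $u \to 0^+$ and $u \to \infty$ both vanish (the factor $e^{-x^2/(4u)}$ kills the singularity at the origin, and $e^{-u}$ controls the tail), so integrating over $(0,\infty)$ and rearranging gives exactly the claimed identity in integral form. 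Since $w(u) > 0$, the term $\frac{x^2}{4}\overline{K}_{\alpha-2}(x) = \frac{x^2}{4}\int_0^\infty u^{\alpha-3}w(u)\intd u$ is nonnegative (and strictly positive for $x > 0$), which yields $\overline{K}_\alpha(x) \geq (\alpha-1)\overline{K}_{\alpha-1}(x)$.

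The only delicate point is convergence in the regime $1 < \alpha \le 2$, where the exponent $\alpha - 3$ appearing in $\overline{K}_{\alpha-2}$ makes the integrand singular at $u = 0$; this is precisely where one must use $x > 0$ so that $e^{-x^2/(4u)}$ forces both the integrand and the boundary term to vanish at the origin. This is the main thing to verify carefully. I expect no obstacle of substance beyond this, since the underlying identity is classical and the sign of the correction term is transparent from the integral representation.
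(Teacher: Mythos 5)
Your proof is correct and is essentially the paper's own argument: both establish the exact three-term recurrence $\overline{K}_\alpha(x) = (\alpha-1)\,\overline{K}_{\alpha-1}(x) + \tfrac{x^2}{4}\,\overline{K}_{\alpha-2}(x)$ (equivalently, $K_\alpha(x) = \tfrac{2(\alpha-1)}{x}K_{\alpha-1}(x) + K_{\alpha-2}(x)$, which the paper derives by integration by parts in the representation $K_\alpha(x)=\int_0^\infty e^{-x\cosh t}\cosh(\alpha t)\intd t$) and then discard the manifestly nonnegative $K_{\alpha-2}$ term. The only difference is cosmetic: you integrate a total derivative in the Gamma-type representation $\overline{K}_\alpha(x)=\int_0^\infty u^{\alpha-1}e^{-u-x^2/(4u)}\intd u$, which under the substitution $u = (x/2)e^t$ is the same computation as the paper's integration by parts, and your handling of the $x=0$ case and of the convergence/boundary issues at $u\to 0^+$ is sound.
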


\begin{proof}
    For $x = 0$, the result is trivial. For $x > 0$, by integration by parts, we have
    \begin{align*}
        K_{\alpha-1}(x) 
            &= \int_0^\infty e^{-x \cosh(t)} \cosh((\alpha-1) t) {\rm d} t \\
            &= \frac{x}{\alpha-1} \cdot \int_0^\infty  e^{-x \cosh(t)} \sinh(t) \sinh((\alpha-1) t) {\rm d} t \\
            &= \frac{x}{2(\alpha-1)} \cdot \int_0^\infty e^{-x \cosh(t)} (\cosh(\alpha t) - \cosh((\alpha - 2)t) {\rm d} t\\
            &= \frac{x}{2(\alpha-1)} \cdot ( K_{\alpha}(x) - K_{\alpha - 2}(x))
            \; .
    \end{align*}
    Rearranging, we see that
    \[
        K_{\alpha}(x) = \frac{2(\alpha - 1)}{x} \cdot K_{\alpha-1}(x) + K_{\alpha-2}(x) \geq \frac{2(\alpha - 1)}{x} \cdot K_{\alpha-1}(x)
        \; ,
    \]
    and the result follows.
\end{proof}

\begin{corollary}\label{cor:zeta_s-2_inequality}
    For any lattice $\lat \subset \R^n$, any $s > n/2$, $q > 0$, we have
        \[
        \qzf{\lat}{s+1}{q} \geq \frac{s - n/2}{qs} \cdot \qzf{\lat}{s}{q}
        \; .
    \]
\end{corollary}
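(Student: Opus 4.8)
The plan is to apply the Poisson-summation representation of $\zeta$ from Eq.~\eqref{eq:zeta_PSF} together with the Bessel recurrence inequality of Lemma~\ref{lem:zeta_recurrence_inequality}, comparing the two zeta functions term by term over the dual lattice $\lat^*$. Set $\alpha := s - n/2 > 0$. Then Eq.~\eqref{eq:zeta_PSF} writes
\[
    \qzf{\lat}{s}{q} = \frac{\pi^{n/2} q^{n/2 - s}}{\Gamma(s)\det(\lat)} \sum_{\vec{w} \in \lat^*} \Kbar_\alpha(2\pi\sqrt{q}\,\|\vec{w}\|)
    \; ,
\]
and replacing $s$ by $s+1$ (so that the Bessel index becomes $\alpha + 1$) gives the analogous expression for $\qzf{\lat}{s+1}{q}$, now carrying the prefactor $\pi^{n/2} q^{n/2 - s - 1}/(\Gamma(s+1)\det(\lat))$.

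Next I would apply Lemma~\ref{lem:zeta_recurrence_inequality} with parameter $\alpha + 1 > 1$ (valid precisely because $s > n/2$), which yields $\Kbar_{\alpha+1}(x) \geq \alpha\,\Kbar_\alpha(x)$ for every $x \geq 0$. Since every term $\Kbar_\beta(2\pi\sqrt{q}\,\|\vec{w}\|)$ is nonnegative, this pointwise inequality may be summed over all $\vec{w} \in \lat^*$ without changing direction, giving
\[
    \sum_{\vec{w} \in \lat^*} \Kbar_{\alpha+1}(2\pi\sqrt{q}\,\|\vec{w}\|) \geq \alpha \sum_{\vec{w} \in \lat^*} \Kbar_{\alpha}(2\pi\sqrt{q}\,\|\vec{w}\|)
    \; .
\]
Substituting this lower bound into the expression for $\qzf{\lat}{s+1}{q}$ reduces the claim to bookkeeping of the scalar prefactors.

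Finally I would simplify the ratio of prefactors using $\Gamma(s+1) = s\,\Gamma(s)$: the constant carried over is $\alpha \cdot q^{-1} \cdot \Gamma(s)/\Gamma(s+1) = (s - n/2)/(qs)$, which is exactly the claimed factor, while the surviving dual sum $\sum_{\vec{w}} \Kbar_\alpha$ reassembles $\qzf{\lat}{s}{q}$. I expect the only care needed is this tracking of the $q$-power and $\Gamma$-factors; there is no genuine analytic obstacle, since convergence of the dual sum for $s > n/2$ and the positivity of $\Kbar_\beta$ (using $\Kbar_\beta(0) = \Gamma(\beta) > 0$ and $K_\beta(x) > 0$ for $x > 0$) are already in place, so the term-by-term comparison is legitimate.
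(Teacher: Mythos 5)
Your proposal is correct and matches the paper's proof essentially verbatim: both use the Poisson-summation representation in Eq.~\eqref{eq:zeta_PSF} for $\qzf{\lat}{s+1}{q}$, apply Lemma~\ref{lem:zeta_recurrence_inequality} termwise with index $s+1-n/2 > 1$, and reassemble $\qzf{\lat}{s}{q}$ via $\Gamma(s+1) = s\,\Gamma(s)$ to obtain the factor $(s-n/2)/(qs)$. Your prefactor bookkeeping and the justification for the termwise comparison are both sound.
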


\begin{proof}
Using Eq.~\eqref{eq:zeta_PSF}, we have
\begin{align*}
        \det(\lat) \cdot \qzf{\lat}{s+1}{q} 
        & = \frac{\pi^{\frac{n}{2}}q^{\frac{n}{2}-s-1}}{\Gamma(s+1)} \cdot \sum_{\vec{w}\in\lat^*}
        \overline{K}_{s+1-\frac{n}{2}}(2\pi\sqrt{q}\|\vec{w}\|)\\
        & \geq (s-\frac{n}{2}) \cdot \frac{\pi^{\frac{n}{2}}q^{\frac{n}{2}-s-1}}{\Gamma(s+1)} \cdot \sum_{\vec{w}\in\lat^*}
        \overline{K}_{s-\frac{n}{2}}(2\pi\sqrt{q}\|\vec{w}\|)\\
        & = \det(\lat) \cdot \frac{s-n/2}{qs} \cdot \qzf{\lat}{s}{q}
    \; ,
\end{align*}
where the inequality is by Lemma~\ref{lem:zeta_recurrence_inequality}, and we have again applied Eq.~\eqref{eq:zeta_PSF} in the last line.
\end{proof}

\begin{corollary}
    \label{cor:laplace_positive}
   For any lattices $\lat_1 \subset \R^{n-d}, \lat_2 \subset \R^d$ with $d \geq 2$, any $s > n/2$, and $0 \leq q \leq \frac{2s-n}{d+2} \cdot \lambda_1(\lat_2)^2$,
   \[
    \Delta_{S_0^d} \mathcal{E}_{\lat_1, \lat_2, s;q} > 0
    \; .
   \]
\end{corollary}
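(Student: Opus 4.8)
The plan is to reduce the claim to a term-by-term positivity statement using the closed form for the Laplacian from Lemma~\ref{lem:laplace_computation}, and then to feed the Bessel-function recurrence of Corollary~\ref{cor:zeta_s-2_inequality} into each term. By Lemma~\ref{lem:laplace_computation}, the quantity $\Delta_{S_0^d}\mathcal{E}_{\lat_1,\lat_2,s;q}$ equals the positive constant $s(d-1)/d$ (recall $d \geq 2$ and $s > 0$) times $\sum_{\vec{y}\in\lat_2,\, \vec{y}\neq\vec0} T(\vec{y})$, where
\[
    T(\vec{y}) := (s+1)\|\vec{y}\|^4 \qzf{\lat_1}{s+2}{\|\vec{y}\|^2+q} - (d/2+1)\|\vec{y}\|^2 \qzf{\lat_1}{s+1}{\|\vec{y}\|^2+q}
    \; .
\]
So it suffices to show that $T(\vec{y})\geq 0$ for every nonzero $\vec{y}\in\lat_2$, with strict inequality for at least one such $\vec{y}$.

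First I would fix a nonzero $\vec{y}\in\lat_2$, abbreviate $q' := \|\vec{y}\|^2 + q > 0$, and factor $\|\vec{y}\|^2$ out of $T(\vec{y})$. The key step is to apply Corollary~\ref{cor:zeta_s-2_inequality} to $\lat_1$ — viewed as a full-rank lattice in $\R^m$ with $m := \rank(\lat_1) \leq n-d$ — at parameter $s+1$ (note $s+1 > n/2 \geq m/2$) and at the point $q'$. This lower-bounds $\qzf{\lat_1}{s+2}{q'}$ by $\frac{(s+1)-m/2}{q'(s+1)}\,\qzf{\lat_1}{s+1}{q'}$, and substituting gives
\[
    T(\vec{y}) \geq \|\vec{y}\|^2\,\qzf{\lat_1}{s+1}{q'}\cdot\Big(\frac{\|\vec{y}\|^2\big((s+1)-m/2\big)}{q'} - (d/2+1)\Big)
    \; .
\]
Since $\qzf{\lat_1}{s+1}{q'} > 0$ (its $\vec{x}=\vec0$ term alone contributes $(q')^{-(s+1)}>0$), everything reduces to showing the parenthesized factor is nonnegative.

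The crux is then a short algebraic check: clearing $q' = \|\vec{y}\|^2 + q$, the parenthesized factor is nonnegative exactly when $\|\vec{y}\|^2\big(s-(m+d)/2\big) \geq \tfrac{d+2}{2}\,q$. Because $m \leq n-d$ we have $s-(m+d)/2 \geq s-n/2 > 0$, so it suffices that $q \leq \frac{2s-n}{d+2}\|\vec{y}\|^2$; and this is guaranteed by the hypothesis $q \leq \frac{2s-n}{d+2}\lambda_1(\lat_2)^2$ together with $\|\vec{y}\|^2 \geq \lambda_1(\lat_2)^2$. The threshold $\frac{2s-n}{d+2}$ in the statement is precisely what makes the identity $(d/2+1)\cdot\frac{2s-n}{d+2} = s-n/2$ work out, so it is exactly the largest $q$ for which the term-by-term bound survives. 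For strictness, I would observe that any $\vec{y}$ with $\|\vec{y}\|^2 > \lambda_1(\lat_2)^2$ (e.g.\ twice a shortest vector, which exists since $\lat_2$ is a nonzero lattice) makes the parenthesized factor strictly positive, hence $T(\vec{y}) > 0$; summing and multiplying by $s(d-1)/d>0$ then yields $\Delta_{S_0^d}\mathcal{E} > 0$.

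I do not anticipate a serious obstacle, since the heavy lifting — computing the Laplacian and establishing the recurrence — is already packaged in Lemma~\ref{lem:laplace_computation} and Corollary~\ref{cor:zeta_s-2_inequality}. The one point demanding care is the dimension bookkeeping in Corollary~\ref{cor:zeta_s-2_inequality}: it must be applied with the \emph{rank} of $\lat_1$ rather than the ambient dimension $n-d$, and one must confirm that replacing $\rank(\lat_1)$ by its upper bound $n-d$ only weakens the inequality we need, so that the stated hypothesis on $q$ still suffices.
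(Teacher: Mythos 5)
Your proposal is correct and takes essentially the same route as the paper: apply Lemma~\ref{lem:laplace_computation}, then feed Corollary~\ref{cor:zeta_s-2_inequality} into each term of the sum, and check that the hypothesis $q \leq \frac{2s-n}{d+2}\lambda_1(\lat_2)^2$ makes the resulting parenthesized factor nonnegative. Your two refinements---invoking Corollary~\ref{cor:zeta_s-2_inequality} with $m=\rank(\lat_1)$ and noting that relaxing $m$ to $n-d$ only weakens the needed bound, and supplying an explicit strictness witness $\vec{y}$ with $\|\vec{y}\| > \lambda_1(\lat_2)$---are harmless and in fact tidy up a point the paper's chain of inequalities glosses over (its middle ``$>$'' is not strict for vectors of length exactly $\lambda_1(\lat_2)$, or when $q=0$).
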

\begin{proof}
    By Lemma~\ref{lem:laplace_computation}, we have
    \[
    \Delta_{S_0^d} \mathcal{E}  = s \cdot  \frac{d-1}{d} \cdot \sum_{\substack{\vec{y} \in \lat_2\\\vec{y}\neq\vec0}} \big( (s+1) \|\vec{y}\|^4 \qzf{\lat_1}{s+2}{\|\vec{y}\|^2+q}- (d/2+1) \|\vec{y}\|^2 \qzf{\lat_1}{s+1}{\|\vec{y}\|^2+q} \big) 
    \; .
    \]
    Applying Corollary~\ref{cor:zeta_s-2_inequality} gives
    \begin{align*}
        \Delta_{S_0^d} \mathcal{E}
            &\geq s \cdot \frac{d-1}{d} \cdot \sum_{\substack{\vec{y} \in \lat_2\\\vec{y}\neq\vec0}} \|\vec{y}\|^2\Big( \frac{s-(n-d)/2+1}{\|\vec{y}\|^2 + q} \cdot \|\vec{y}\|^2 - (d/2+1) \Big) \qzf{\lat_1}{s+1}{\|\vec{y}\|^2+q}\\
            &> 
 s \cdot \frac{d-1}{d} \cdot \sum_{\substack{\vec{y} \in \lat_2\\\vec{y}\neq\vec0}} \|\vec{y}\|^2  \Big( \frac{s-(n-d)/2+1}{1 + q/\lambda_1(\lat_2)^2} - (d/2+1)\Big) \qzf{\lat_1}{s+1}{\|\vec{y}\|^2+q}\\
 &\geq 0
 \; ,
    \end{align*}
     as needed.
\end{proof}

\section{Proof of the main theorem}

We now prove Theorem~\ref{thm:zeta_RM_intro}.  By Item~\ref{item:contraction} of Proposition~\ref{prop:stable}, it suffices to restrict our attention to stable lattices $\lat$, as we do in the following theorem.
    
    \begin{theorem}
\label{thm:main_stable2}
    For any stable lattice $\lat \subset \R^n$, $s > n/2$, and $0 \leq q \leq \frac{2s-n}{n+2}$,
    \[
        \qzfp{\lat}{s}{q} \leq \qzfp{\Z^n}{s}{q}
        \; ,
    \]
    with equality if and only if $\lat$ is isomorphic to $\Z^n$.
\end{theorem}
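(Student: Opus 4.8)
The plan is to work with a maximizer of $\qzfp{\cdot}{s}{q}$ over the set of stable lattices, which exists because this function is continuous and the set of stable lattices is compact (Item~\ref{item:stable_compact} of Proposition~\ref{prop:stable}); call it $\lat$. Since $\Z^n$ is stable, it suffices to prove that every such maximizer satisfies $\lat \cong \Z^n$: this gives $\qzfp{\lat'}{s}{q} \leq \qzfp{\Z^n}{s}{q}$ for all stable $\lat'$, and for the equality case, if $\qzfp{\lat'}{s}{q} = \qzfp{\Z^n}{s}{q}$ then $\lat'$ is itself a maximizer and hence isomorphic to $\Z^n$, while the converse is immediate from the isometry-invariance of $\qzfp{\cdot}{s}{q}$. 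To show $\lat \cong \Z^n$, I would invoke Corollary~\ref{cor:decomposeZn}: it is enough to prove that every decomposition $\lat \cong \lat_1 \oplus \lat_2$ with $\lat_2$ non-trivial and indecomposable (and $\lat_1$ possibly trivial) has $\rank(\lat_2) = 1$.

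So fix such a decomposition and suppose toward a contradiction that $d := \rank(\lat_2) \geq 2$. I would first record that both $\lat_1$ and $\lat_2$ are stable: they are sublattices of the stable lattice $\lat$, so all of their sublattices have determinant $\geq 1$, and from $\det(\lat_1)\det(\lat_2) = \det(\lat) = 1$ with $\det(\lat_1), \det(\lat_2) \geq 1$ we get $\det(\lat_1) = \det(\lat_2) = 1$. In particular $\lambda_1(\lat_2) \geq 1$, since a shortest nonzero vector of $\lat_2$ generates a rank-one sublattice of determinant $\lambda_1(\lat_2)$. Moreover $\lat_2$ maximizes $\M_2 \mapsto \qzfp{\lat_1 \oplus \M_2}{s}{q}$ over all stable rank-$d$ lattices $\M_2$, since $\lat_1 \oplus \M_2$ is stable (Item~\ref{item:stable_direct_sum}) and $\lat$ is a global maximizer. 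I would then split on whether $\lat_2$ lies in the interior or on the boundary of the set of stable rank-$d$ lattices.

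In the interior case, all sufficiently small $A \in S_0^d$ keep $e^{A/2}\lat_2$ stable, and hence keep $\lat_1 \oplus e^{A/2}\lat_2$ stable. Because $\lambda_1(\lat_2)^2 \geq 1$ and $d \leq n$, the hypothesis $q \leq (2s-n)/(n+2)$ implies $q \leq \tfrac{2s-n}{d+2}\lambda_1(\lat_2)^2$, so Corollary~\ref{cor:laplace_positive} (using $d \geq 2$) gives $\Delta_{S_0^d}\mathcal{E}_{\lat_1,\lat_2,s;q} > 0$. As a positive Laplacian rules out a local maximum at $0$, there is an arbitrarily small $A \in S_0^d$ with $\qzfp{\lat_1 \oplus e^{A/2}\lat_2}{s}{q} > \qzfp{\lat_1 \oplus \lat_2}{s}{q} = \qzfp{\lat}{s}{q}$, contradicting maximality. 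In the boundary case, Item~\ref{item:stable_quotient_boundary} supplies a primitive sublattice $\lat_2' \subset \lat_2$ with $0 < \rank(\lat_2') < d$ such that $\lat_2'$ and $\lat_2/\lat_2'$ are both stable; Corollary~\ref{cor:zeta_directSum} then gives $\qzfp{\lat}{s}{q} \leq \qzfp{\lat_1 \oplus (\lat_2/\lat_2') \oplus \lat_2'}{s}{q}$, and the right-hand lattice is a direct sum of stable lattices, hence stable, so maximality forces equality and therefore $\lat_2 \cong (\lat_2/\lat_2') \oplus \lat_2'$ --- contradicting the indecomposability of $\lat_2$. Both cases are contradictory, so $\rank(\lat_2) = 1$, and Corollary~\ref{cor:decomposeZn} yields $\lat \cong \Z^n$.

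The genuine analytic content --- that the relevant Laplacian is positive --- is already isolated in Corollary~\ref{cor:laplace_positive}, so the main obstacle in assembling the theorem is the feasibility bookkeeping around the maximizer. Concretely, the crux is that in the interior case the value-increasing perturbation can be chosen to stay within the stable set, which is precisely what can fail for the theta function at large $\tau$ (the local maxima of Heimendahl et al.) and is what the constraint $q \leq (2s-n)/(n+2)$ buys us here; a secondary point is checking that this single threshold is strong enough for every factor $\lat_2$, which is where stability ($\lambda_1(\lat_2) \geq 1$) and $d \leq n$ together reduce the per-factor requirement $q \leq \tfrac{2s-n}{d+2}\lambda_1(\lat_2)^2$ to the global hypothesis.
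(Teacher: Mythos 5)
Your proposal is correct and follows essentially the same route as the paper's own proof: take a maximizer over the compact set of stable lattices, decompose it as $\lat \cong \lat_1 \oplus \lat_2$ with $\lat_2$ indecomposable via Claim~\ref{claim:decompose} and Corollary~\ref{cor:decomposeZn}, and rule out $\rank(\lat_2) \geq 2$ by the boundary/interior dichotomy, using Corollary~\ref{cor:zeta_directSum} on the boundary and Corollary~\ref{cor:laplace_positive} in the interior. If anything, you make explicit two points the paper leaves implicit --- the treatment of the equality case and the feasibility check that $q \leq (2s-n)/(n+2)$ implies $q \leq \frac{2s-n}{d+2}\lambda_1(\lat_2)^2$ via $\lambda_1(\lat_2) \geq 1$ and $d \leq n$.
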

\begin{proof} Since $\qzfp{\lat'}{s}{q}$ is a continuous function of $\lat'$ and the set of stable lattices with rank $n$ is compact,  $\qzfp{\lat'}{s}{q}$ achieves its maximum over the set of stable lattices. We may therefore assume that $\qzfp{\lat'}{s}{q}$ is maximized over this set at $\lat' = \lat$ and show that this implies that $\lat$ is isomorphic to $\Z^n$.
Applying Claim~\ref{claim:decompose}, we write $\lat \cong \lat_1 \oplus \lat_2$ where $\lat_2$ is indecomposable (and $\lat_1$ might be the trivial lattice $\{\vec0\}$). Note that $\lat_2$ is stable, as it is a sublattice of a stable lattice and must have determinant one  (since $\det(\lat_1) \det(\lat_2) = \det(\lat) = 1$ and $\det(\lat_i) \geq 1$). 

Then, by Corollary~\ref{cor:decomposeZn}, it suffices to show that $\lat_2$ must have rank $1$.  So, we suppose that $\rank(\lat_2)>1$ and derive a contradiction. We consider separately the case when $\lat_2$ lies on the boundary of the set of stable lattices and the case when $\lat_2$ lies in the interior of this set.

If $\lat_2$ is on the boundary of the set of stable lattices, then by Item~\ref{item:stable_quotient_boundary} of Proposition~\ref{prop:stable}, there exists a strict sublattice $\lat_3 \subset \lat_2$ such that $0<\rank(\lat_3)<\rank(\lat_2)$ where $\lat_3$ and $\lat_2/\lat_3$ are both stable. Then, applying Corollary~\ref{cor:zeta_directSum}, we have $\qzfp{\lat_1\oplus\lat_2}{s}{q}< \qzfp{\lat_1\oplus\lat_3\oplus\lat_2/\lat_3}{s}{q}$. (The inequality is strict because $\lat_2$ is indecomposable and therefore cannot be isomorphic to $\lat_3 \oplus \lat_2/\lat_3$.)  This contradicts the assumption that $\lat$ is a global maximizer of $\qzfp{\lat'}{s}{q}$ over the set of stable lattices (since $\lat_1 \oplus \lat_3 \oplus \lat_2/\lat_3$ is stable by Item~\ref{item:stable_direct_sum} of Proposition~\ref{prop:stable}).

Next, if $\lat_2$ is in the interior of the set of stable lattices, then by Corollary~\ref{cor:laplace_positive} the Laplacian of the function $\mathcal{E}(A) :=
    \qzfp{\lat_1 \oplus e^{A/2} \lat_2}{s}{q}$ over the space of trace-zero symmetric matrices $A$ is strictly positive. This implies that there exists a small perturbation $\lat''$ of $\lat_2$ that is also stable with $\qzfp{\lat_1\oplus \lat_2}{s}{q}<\qzfp{\lat_1\oplus \lat''}{s}{q}$. Since $\lat_1 \oplus \lat''$ is stable (by Item~\ref{item:stable_direct_sum} of Proposition~\ref{prop:stable}), this again contradicts the assumption that $\lat$ maximizes $\zeta'$ over the set of stable lattices.

It follows that the rank of $\lat_2$ must be $1$, and therefore $\lat\cong\Z^n$ as needed.
\end{proof}

\appendix

\section{Second derivative of the Epstein zeta function}
\label{app:second_derivative}

In this appendix, we compute the second derivative of $\mathcal{E}(A)$, which we used in the proof of Lemma~\ref{lem:laplace_computation}.

    Let $0<\eps \leq \log(2)/\|A\|$ for $\|A\|\neq 0$ (when $\|A\|=0$ the result is trivial). 
    Then, 
     \begin{align*}
       \mathcal{E}(\eps A) 
       &=  \sum_{\substack{\vec{x} \in \lat_1,\vec{y} \in \lat_2\\(\vec{x},\vec{y})\neq\vec0}} (\|\vec{x}\|^2 + \vec{y}^T e^{\eps A} \vec{y} + q)^{-s}\\
        &= \sum_{\substack{\vec{x} \in \lat_1,\vec{y} \in \lat_2\\(\vec{x},\vec{y})\neq\vec0}} (\|\vec{x}\|^2 + \|\vec{y}\|^2 + q)^{-s}\bigg(1 + \frac{\vec{y}^T(e^{\eps A} - I)\vec{y}}{\|\vec{x}\|^2 + \|\vec{y}\|^2 + q}\bigg)^{-s}\\
        &= \sum_{\substack{\vec{x} \in \lat_1,\vec{y} \in \lat_2\\(\vec{x},\vec{y})\neq\vec0}} (\|\vec{x}\|^2 + \|\vec{y}\|^2 + q)^{-s}\bigg(1-s \frac{\vec{y}^T (e^{\eps A}-I) \vec{y}}{\|\vec{x}\|^2 + \|\vec{y}\|^2 + q} + \frac{s(s+1)}{2} \cdot  \frac{(\vec{y}^T (e^{\eps A}-I) \vec{y})^2}{(\|\vec{x}\|^2 + \|\vec{y}\|^2 + q)^2} + O(\eps^3)\bigg)
       \; ,
     \end{align*}
     where the last line uses the fact that $(1+\delta)^{-s} = 1-s \delta + s(s+1) \delta^2/2 + O(\delta^3)$ for $|\delta| < 1$.
     Therefore, recalling the definition of the matrix exponential, we see that $ \mathcal{E}(\eps A)$ is equal to
      \[
       \sum_{\substack{\vec{x} \in \lat_1,\vec{y} \in \lat_2\\(\vec{x},\vec{y})\neq\vec0}} (\|\vec{x}\|^2 + \|\vec{y}\|^2 + q)^{-s}\bigg(1 -  \frac{\eps s\vec{y}^T A\vec{y}}{\|\vec{x}\|^2 + \|\vec{y}\|^2 + q} + \frac{\eps^2 s}{2 (\|\vec{x}\|^2 + \|\vec{y}\|^2 + q)}\bigg( \frac{(s+1) (\vec{y}^T A \vec{y})^2}{\|\vec{x}\|^2 + \|\vec{y}\|^2 + q} - \vec{y}^T A^2 \vec{y}\bigg) +O(\eps^3)\bigg) \; .
     \]
     
    It follows that 
    \[
        \frac{\partial^2}{\partial A^2} \mathcal{E} = s \sum_{\substack{\vec{x} \in \lat_1,\vec{y} \in \lat_2\\(\vec{x},\vec{y})\neq\vec0}} (\|\vec{x}\|^2 + \|\vec{y}\|^2+q)^{-s-1}\bigg((s+1)\frac{(\vec{y}^T A\vec{y})^2}{\|\vec{x}\|^2 + \|\vec{y}\|^2+q}-\vec{y}^T A^2 \vec{y}\bigg)
        \; ,
    \]
    as claimed.

\newcommand{\etalchar}[1]{$^{#1}$}
\def\cprime{$'$}

\end{document}